\newtheorem{theorem}{{\sc Theorem}}[section]
\newtheorem{lemma}[theorem]{{\sc Lemma}}
\newtheorem{remark}[theorem]{Remark}
\newtheorem{definition}[theorem]{Definition}
\newcommand{\bb}[1]{\mathbb{ #1}}
\newcommand{\dOm}{\partial\Omega}
\newcommand{\bra}[1]{\overline{#1}}
\newcommand{\Trc}{\mathrm{Tr}\,}
\newcommand{\cof}{\mathrm{cof}}
\newcommand{\tns}[1]{#1\otimes #1}
\newcommand{\hf}{\displaystyle\frac{1}{2}}
\newcommand{\nth}[1]{\displaystyle\frac{1}{#1}}
\newcommand{\dif}[2]{\displaystyle\frac{\partial #1}{\partial #2}}
\newcommand{\Grad}{\nabla}
\newcommand{\Div}{\nabla \cdot}
\newcommand{\Curl}{\nabla \times}
\renewcommand{\Hat}[1]{\widehat{#1}}
\newcommand{\Tld}[1]{\widetilde{#1}}
\def\XXint#1#2#3{{\setbox0=\hbox{$#1{#2#3}{\int}$ }
\vcenter{\hbox{$#2#3$ }}\kern-.6\wd0}}
\newcommand{\lims}{\mathop{\overline\lim}}
\newcommand{\limi}{\mathop{\underline\lim}}
\newcommand{\bc}{boundary condition}
\newcommand{\rhs}{right-hand side}
\newcommand{\nbh}{neighborhood}
\newcommand{\IFF}{if and only if }
\newcommand{\Ga}{\alpha}
\newcommand{\Gb}{\beta}
\newcommand{\Gd}{\delta}
\newcommand{\Ge}{\epsilon}
\newcommand{\Gf}{\phi}
\newcommand{\Gg}{\gamma}
\newcommand{\Gl}{\lambda}
\newcommand{\Gth}{\theta}
\newcommand{\Gs}{\sigma}
\newcommand{\Gz}{\zeta}
\newcommand{\GG}{\Gamma}
\newcommand{\GO}{\Omega}
\bmdefine\BGa{\alpha}
\bmdefine\BGb{\beta}
\bmdefine\BGd{\delta}
\bmdefine\BGe{\epsilon}
\bmdefine\BGve{\varepsilon}
\bmdefine\BGf{\phi}
\bmdefine\BGvf{\varphi}
\bmdefine\BGg{\gamma}
\bmdefine\BGc{\chi}
\bmdefine\BGi{\iota}
\bmdefine\BGk{\kappa}
\bmdefine\BGl{\lambda}
\bmdefine\BGn{\eta}
\bmdefine\BGm{\mu}
\bmdefine\BGv{\nu}
\bmdefine\BGp{\pi}
\bmdefine\BGth{\theta}
\bmdefine\BGvth{\vartheta}
\bmdefine\BGr{\rho}
\bmdefine\BGvr{\varrho}
\bmdefine\BGs{\sigma}
\bmdefine\BGvs{\varsigma}
\bmdefine\BGt{\tau}
\bmdefine\BGj{\tau}
\bmdefine\BGu{\upsilon}
\bmdefine\BGo{\omega}
\bmdefine\BGx{\xi}
\bmdefine\BGy{\psi}
\bmdefine\BGz{\zeta}
\bmdefine\BGD{\Delta}
\bmdefine\BGF{\Phi}
\bmdefine\BGG{\Gamma}
\bmdefine\BGL{\Lambda}
\bmdefine\BGP{\Pi}
\bmdefine\BGT{\Theta}
\bmdefine\BGS{\Sigma}
\bmdefine\BGU{\Upsilon}
\bmdefine\BGO{\Omega}
\bmdefine\BGX{\Xi}
\bmdefine\BGY{\Psi}
\newcommand{\CA}{{\mathcal A}}
\newcommand{\CB}{{\mathcal B}}
\newcommand{\CC}{{\mathcal C}}
\newcommand{\CE}{{\mathcal E}}
\bmdefine\BCA{{\mathcal A}}
\bmdefine\BCB{{\mathcal B}}
\bmdefine\BCC{{\mathcal C}}
\bmdefine\BCD{{\mathcal D}}
\bmdefine\BCE{{\mathcal E}}
\bmdefine\BCF{{\mathcal F}}
\bmdefine\BCG{{\mathcal G}}
\bmdefine\BCH{{\mathcal H}}
\bmdefine\BCI{{\mathcal I}}
\bmdefine\BCJ{{\mathcal J}}
\bmdefine\BCK{{\mathcal K}}
\bmdefine\BCL{{\mathcal L}}
\bmdefine\BCM{{\mathcal M}}
\bmdefine\BCN{{\mathcal N}}
\bmdefine\BCO{{\mathcal O}}
\bmdefine\BCP{{\mathcal P}}
\bmdefine\BCQ{{\mathcal Q}}
\bmdefine\BCR{{\mathcal R}}
\bmdefine\BCS{{\mathcal S}}
\bmdefine\BCT{{\mathcal T}}
\bmdefine\BCU{{\mathcal U}}
\bmdefine\BCV{{\mathcal V}}
\bmdefine\BCW{{\mathcal W}}
\bmdefine\BCX{{\mathcal X}}
\bmdefine\BCY{{\mathcal Y}}
\bmdefine\BCZ{{\mathcal Z}}
\bmdefine\Bzr{ 0}
\bmdefine\Ba{ a}
\bmdefine\Bb{ b}
\bmdefine\Bc{ c}
\bmdefine\Bd{ d}
\bmdefine\Be{ e}
\bmdefine\Bf{ f}
\bmdefine\Bg{ g}
\bmdefine\Bh{ h}
\bmdefine\Bi{ i}
\bmdefine\Bj{ j}
\bmdefine\Bk{ k}
\bmdefine\Bl{ l}
\bmdefine\Bm{ m}
\bmdefine\Bn{ n}
\bmdefine\Bo{ o}
\bmdefine\Bp{ p}
\bmdefine\Bq{ q}
\bmdefine\Br{ r}
\bmdefine\Bs{ s}
\bmdefine\Bt{ t}
\bmdefine\Bu{ u}
\bmdefine\Bv{ v}
\bmdefine\Bw{ w}
\bmdefine\Bx{ x}
\bmdefine\By{ y}
\bmdefine\Bz{ z}
\bmdefine\BA{ A}
\bmdefine\BB{ B}
\bmdefine\BC{ C}
\bmdefine\BD{ D}
\bmdefine\BE{ E}
\bmdefine\BF{ F}
\bmdefine\BG{ G}
\bmdefine\BH{ H}
\bmdefine\BI{ I}
\bmdefine\BJ{ J}
\bmdefine\BK{ K}
\bmdefine\BL{ L}
\bmdefine\BM{ M}
\bmdefine\BN{ N}
\bmdefine\BO{ O}
\bmdefine\BP{ P}
\bmdefine\BQ{ Q}
\bmdefine\BR{ R}
\bmdefine\BS{ S}
\bmdefine\BT{ T}
\bmdefine\BU{ U}
\bmdefine\BV{ V}
\bmdefine\BW{ W}
\bmdefine\BX{ X}
\bmdefine\BY{ Y}
\bmdefine\BZ{ Z}
\newcommand{\SFL}{\mathsf{L}}
\title{
Scaling instability of the buckling load in axially compressed circular cylindrical shells
}
\author{Yury Grabovsky \and Davit Harutyunyan}
\begin{document}
\maketitle
\begin{abstract}
  In this paper we initiate a program of rigorous analytical investigation of
  the paradoxical buckling behavior of circular cylindrical shells under axial
  compression. This is done by the development and systematic application of
  general theory of ``near-flip'' buckling of 3D slender bodies to cylindrical
  shells. The theory predicts scaling instability of the buckling load due to
  imperfections of load. It also suggests a more dramatic scaling instability
  caused by shape imperfections. The experimentally determined scaling
  exponent 1.5 of the critical stress as a function of shell thickness appears
  in our analysis as the scaling of the lower bound on safe loads given by the
  Korn constant. While the results of this paper fall short of a definitive
  explanation of the buckling behavior of cylindrical shells, we believe that
  our approach is capable of providing reliable estimates of the buckling
  loads of axially compressed cylindrical shells.
\end{abstract}


\section{Introduction}
\label{sec:intro}
A circular cylindrical shell loaded by an axial compressive stress
will buckle producing a variety of buckling
patterns\cite{bushnell81,lcp00,dkzoa12}, including the single-dimple buckle
\cite{zmc02,hlp06}, shown in Figure~\ref{fig:diamond}.
\begin{figure}[h]
  \centering
  \includegraphics[scale=0.4]{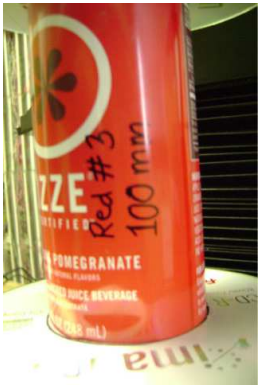}~~~~~~~~~~~~~~~~
\includegraphics[scale=0.52]{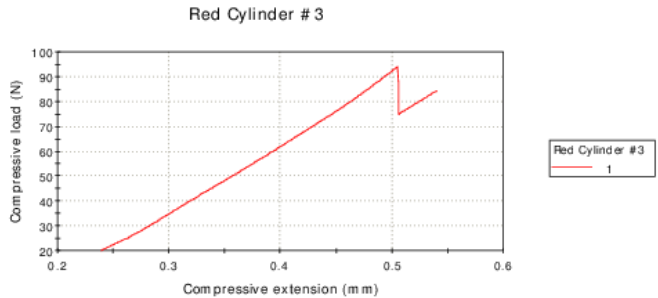}
  \caption{Single-dimple buckling pattern in buckled soda cans \cite{grig10ma}. }
  \label{fig:diamond}
\end{figure}
In the soda can experiments \cite{grig10ma} this dimple consistently appeared
with an audible click, corresponding to the drop in load in
Figure~\ref{fig:diamond} and disappears (also with a click) upon
unloading. This suggests that the local material response is still linearly
elastic, while the global non-linearity is purely geometric. The abrupt
nature of the observed buckling suggests that the trivial branch, whose stress
and strain are well-approximated by linear elasticity, becomes unstable with
respect to the observed buckling variation.

The classical shell theory supplies the following formula for the
critical stress \cite{lorenz11,timosh14} (see also \cite{tiwk59}):
\begin{equation}
  \label{clform}
  \Gs_{\rm cr}=\frac{Eh}{\sqrt{3(1-\nu^{2})}},
\end{equation}
where $E$ and $\nu$ are the Young modulus and the Poisson ratio, respectively,
and $h=t/R$ is the ratio of the wall thickness to the radius of the
cylinder. A large body of experimental results summarized in
\cite{lcp00,zmc02} show that not only the theoretical value of the buckling
load is about 4 to 5 times higher than the one observed in experiments, but
the critical stress $\Gs_{\rm cr}$ scales like $h^{3/2}$ with $h$, in stark
contradiction to (\ref{clform}). Such paradoxical behavior is generally
attributed to the sensitivity of the buckling load to imperfections of load
and shape \cite{almr63,tenn64,wms65,goev70,yama84}, due to the subcritical
nature of the bifurcation \cite{hune91,hune93,lch97,hlc99} in the
von-K\'arm\'an-Donnell equations. Yet, such an interpretation of the
experimental results does not give a quantification of sensitivity to
imperfections, and does little to explain the paradoxical $h^{1.5}$ scaling of
the critical stress. These questions have been raised in
\cite{call2000,zmc02,hlp06}, where a combination of heuristic arguments and
numerical simulations were used to address the problem.  In situations where
the classical shell theory gives predictions inconsistent with experiment, one
can question whether ``sensitivity to imperfections'' is the true source of
the inconsistency, or whether the failure of the heuristic models to
adequately describe stability of slender bodies is at play. In a companion
paper \cite{grha15} we give a rigorous proof of the asymptotical correctness
of (\ref{clform}), showing that the second variation of the energy of the
compressed shell, regarded as a 3D hyperelastic body, becomes negative when
the load exceeds the critical value (\ref{clform}).

Recent years have seen significant progress in the rigorous analysis of
dimensionally reduced theories of plates and shells based on $\GG$-convergence
\cite{fjm02,momu04,fjm06,lmap2011,lmop2011}. In this approach, one must
postulate the scaling of energy and the forces a priori, whereby different
scaling assumptions lead to different dimensionally reduced plate and shell
equations. These analyses show that the tacit assumptions of validity of
specific shell theories must be justified before conclusions about the elastic
behavior of such shells can be regarded as rigorous. By contrast, the theory
in \cite{grtr07}, based on the study of second variation, has no need for such
a priori assumptions, since it pursues a more modest goal of identifying a
critical load at which the ``trivial branch'', or ``fundamental state'', of
equilibria becomes weakly unstable. This exclusive targeting of the
instability without any attempt to compute $\GG$-limiting models or capture
a global bifurcation picture of post-buckling behavior leads to a significant
simplification in the rigorous analysis of stability of slender
structures. Most notably, our approach does not require
compactness of arbitrary low energy sequences as in
\cite{fjm02}. In particular, our method is applicable in situations where
compactness fails, as is the case for the axially compressed cylindrical
shells.

In this paper we prove that a constitutively reduced characterization of the
buckling load, derived in \cite{grtr07}, captures the buckling mode as well. A
more convenient criterion for the validity of this characterization of
buckling, derived in this paper, makes the theory applicable to a broader,
compared to \cite{grtr07}, class of slender structures, including axially
compressed cylindrical shells. While our approach is capable of providing a rigorous proof of
the classical formula (\ref{clform}) \cite{grha15}, it also reveals a possible mechanism of
imperfection sensitivity that may explain the experimental results and their
discrepancy with the classical theory. Specifically, we show that generic
imperfections in loading will change the scaling law of $\Gs_{\rm cr}$ to
$h^{5/4}$. Shape imperfections may lead to an even bigger jump in the scaling
exponent of $\Gs_{\rm cr}$ from $h$ to $h^{3/2}$. The power law $h^{3/2}$
arises as the scaling of the Korn constant \cite{grha14}, shown
to describe the universal lower bound on safe loads in \cite{grtr07}. This
explanation of the experimentally observed scaling of the critical stress
could be viewed as an improvement of the ingenuous but somewhat intuitive
arguments in \cite{call2000,zmc02}.

Generically, shape imperfections eliminate sharp bifurcation
transitions \cite{buha66}. However, the abrupt appearance of the dimple-shaped buckle
accompanied by an audible click in our experiments suggests that, in the case
of a cylindrical shell, shape imperfections \emph{do not eliminate bifurcation
  instability}. If this is indeed the case, our methods would
be able to accurately predict the critical load and the corresponding buckling
mode. However, the rigorous analysis of an imperfect cylindrical shell is beyond
the scope of this paper, since all the estimates proved here and in our
companion paper \cite{grha14} are for the specific circular cylindrical
shell geometry.

This paper is organized as follows. In Section~\ref{sec:genth} we extend the
theory of buckling of general slender bodies based on the asymptotic analysis
of second variation \cite{grtr07}. We define ``compression tensor'' and
further develop the method of buckling equivalence \cite{grtr07}.
Section~\ref{sec:ccshb} applies the theory in Section~\ref{sec:genth} and the
asymptotics of the Korn and Korn-type constants proved in \cite{grha14} to the
computation of the scaling law of the buckling load. We next demonstrate
scaling instabilities due to generic imperfections of load. We conclude the
paper with a less rigorous discussion of imperfections of shape.

\section{Buckling of slender structures}
\setcounter{equation}{0}
\label{sec:genth}
In this section we revisit the general theory of buckling developed in
\cite{grtr07} in order to extend and apply it to buckling of axially
compressed cylindrical shells. The theory provides a recipe for computing the
asymptotics of buckling loads of slender structures, as the slenderness
parameter goes to zero.

 We follow the established tradition and use the
energy criterion of stability. Namely, we say that the deformation
$\By=\By(\Bx)$, $\Bx\in\GO\subset\bb{R}^{3}$ is stable if it is a weak local
minimizer of the energy
\[
\CE(\By)=\int_{\GO}W(\Grad\By)d\Bx-\int_{\dOm}\By\cdot\Bt(\Bx)dS(\Bx),
\]
where $W(\BF)$ is the energy density function of the body and $\Bt(\Bx)$ is
the vector of dead load tractions.
The energy density $W(\BF)$ satisfies the four fundamental properties:
\begin{itemize}
\item[(P1)] Absence of prestress: $W_{\BF}(\BI)=\Bzr$;
\item[(P2)] Frame indifference: $W(\BF\BR)=W(\BF)$ for every $\BR\in SO(3)$;
\item[(P3)] Local stability of the trivial deformation $\By(\Bx)=\Bx$: $(\SFL_{0}\BGx,\BGx)\ge 0$ for any
  $\BGx\in\bb{R}^{3\times 3}$, where $\SFL_{0}=W_{\BF\BF}(\BI)$ is the linearly
  elastic tensor of material properties;
\item[(P4)] Non-degeneracy: $(\SFL_{0}\BGx,\BGx)=0$ \IFF $\BGx^{T}=-\BGx$.
\end{itemize}
Here, and elsewhere in this paper we use the notation
$(\BA,\BB)=\Trc(\BA\BB^{T})$ for the Frobenius inner product on the space of
$3\times 3$ matrices.

In \cite{grtr07} we attribute the universal nature of buckling to the two
universal properties (P1) and (P2) of the energy density function because they
guarantee non-convexity of $W(\BF)$ in any \nbh\ of the identity $\BI$. We
also remark that properties (P3) and (P4) of $\SFL_{0}$ imply a uniform lower
bound
\begin{equation}
  \label{Lcoerc}
  (\SFL_{0}\BGx,\BGx)\ge\Ga_{\SFL_{0}}|\BGx_{\rm sym}|^{2},\qquad\BGx_{\rm
    sym}=\hf(\BGx+\BGx^{T})
\end{equation}
for some $\Ga_{\SFL_{0}}>0$.

\subsection{Trivial branch}
Consider a sequence of progressively slender\footnote{The appropriate notion
  of slenderness, introduced in \cite{grtr07} is made precise in
  Defintion~\ref{def:slender}.} domains $\GO_{h}$ parametrized by a
dimensionless parameter $h$. For example, for circular cylindrical shells, $h$
is the ratio of cylinder wall thickness to the cylinder radius (we keep the
ratio of cylinder height to its radius constant). We consider a loading
program parametrized by the loading parameter $\Gl$ describing the magnitude
of the applied tractions $\Bt(\Bx;h,\Gl)=\Gl\Bt^{h}(\Bx)+O(\Gl^{2})$, as
$\Gl\to 0$, or as a measure of the prescribed strain. Here and below
$O(\Gl^{\Ga})$ is understood \emph{uniformly} in $\Bx\in\GO_{h}$ and
$h\in[0,h_{0}]$. Let $\By(\Bx;h,\Gl)$ be a family of Lipschitz equilibria of
\begin{equation}
  \label{energy}
\CE(\By;h,\Gl)=\int_{\GO_{h}}W(\Grad\By)d\Bx-\int_{\dOm_{h}}\By(\Bx)\cdot\Bt(\Bx;h,\Gl)dS(\Bx).
\end{equation}
defined on $\bra{\GO}\times[0,h]\times[0,\Gl_{0}]$ for some $h_{0}>0$ and
$\Gl_{0}>0$. The general theory can treat a wide range of \bc s. To describe
one, we restrict $\By$ to an affine subspace of $W^{1,\infty}(\GO_{h};\bb{R}^{3})$, given by
\begin{equation}
  \label{hdbc}
  \By\in\bra{\By}(\Bx;h,\Gl)+V_{h}^{\circ},
\end{equation}
where $V_{h}^{\circ}$ is a linear subspace of
$W^{1,\infty}(\GO_{h};\bb{R}^{3})$ that contains
$W_{0}^{1,\infty}(\GO_{h};\bb{R}^{3})$ and does not depend on the loading
parameter $\Gl$. The given function $\bra{\By}(\Bx;h,\Gl)\in
W^{1,\infty}(\GO_{h};\bb{R}^{3})$ describes the ``Dirichlet part'' of the \bc
s, while the traction vector $\Bt(\Bx;h,\Gl)$ describes the
Neumann-part\footnote{The use of a general subspace $V_{h}^{\circ}$ permits
  one to describe loadings in which desired linear combinations of the
  displacement and traction components are prescribed on the boundary.}.  An
example of such a description of the \bc s for the cylindrical shell will be
given in Section~\ref{sub:trbr}.
\begin{definition}
  \label{def:trbr}
  We call the family of Lipschitz equilibria $\By(\Bx;h,\Gl)$ of $\CE(\By;h,\Gl)$ a
  \textbf{linearly elastic trivial branch} if there exist $h_{0}>0$ and $\Gl_{0}>0$,
  so that for every $h\in[0,h_{0}]$ and $\Gl\in[0,\Gl_{0}]$
\begin{itemize}
\item[(i)] $\By(\Bx;h,0)=\Bx$
\item[(ii)] There exist a family of Lipschitz functions $\Bu^{h}(\Bx)$,
  independent of $\Gl$, such that
\begin{equation}
  \label{fundass}
  \|\Grad\By(\Bx;h,\Gl)-\BI-\Gl\Grad\Bu^{h}(\Bx)\|_{L^{\infty}(\GO_{h})}\le C\Gl^{2},
\end{equation}
\item[(iii)]
  \begin{equation}
    \label{reglbda}
    \|\dif{(\Grad\By)}{\Gl}(\Bx;h,\Gl)-\Grad\Bu^{h}(\Bx)\|_{L^{\infty}(\GO_{h})}\le C\Gl
  \end{equation}
\end{itemize}
where the constant $C$ is independent of $h$ and $\Gl$.
\end{definition}
We remark that neither uniqueness nor stability of the trivial branch are
assumed.

The equilibrium equations and the \bc s satisfied by the trivial branch
$\By(\Bx;h,\Gl)$ can be written explicitly in the weak form:
\begin{equation}
  \label{ELwk}
  \int_{\GO_{h}}(W_{\BF}(\Grad\By(\Bx;h,\Gl)),\Grad\BGf)d\Bx
-\int_{\dOm_{h}}\BGf\cdot\Bt(\Bx;h,\Gl)dS=0,\qquad\BGf\in V_{h}^{\circ},
\end{equation}
Differentiating (\ref{ELwk}) in $\Gl$ at $\Gl=0$, which is allowed due to
(\ref{fundass}), we obtain
\begin{equation}
  \label{linel}
  \int_{\GO_{h}}(\SFL_{0}\Grad\Bu^{h}(\Bx),\Grad\BGf)d\Bx
-\int_{\dOm_{h}}\BGf\cdot\Bt^{h}(\Bx)dS=0,\qquad\BGf\in V_{h}^{\circ},
\end{equation}
In \cite{grtr07} the notion of the near-flip buckling is defined when for any
$h\in[0,h_{0}]$ the trivial branch $\By(\Bx;h,\Gl)$ becomes unstable for
$\Gl>\Gl(h)$, where $\Gl(h)\to 0$, as $h\to 0$. This happens because it
becomes energetically more advantageous to activate bending modes rather than
store more compressive stress. This exchange of stability is detected
by the change in sign of the second variation of energy
\[
\Gd^{2}\CE(\BGf;h,\Gl)=\int_{\GO_{h}}(W_{\BF\BF}(\Grad\By(\Bx;h,\Gl))\Grad\BGf,\Grad\BGf)d\Bx,
\qquad\BGf\in V_{h},
\]
where $V_{h}=\bra{V_{h}^{\circ}}$ is the closure of $V_{h}^{\circ}$ in
$W^{1,2}(\GO_{h};\bb{R}^{3})$.
The second variation is always non negative, when $0<\Gl<\Gl(h)$ and can become
negative for some choice of the admissible variation $\BGf\in V_{h}$, when
$\Gl>\Gl(h)$. It was understood in \cite{grtr07} that this failure of weak
stability is due to properties (P1)--(P4) of $W(\BF)$ and is intimately
related to flip instability in soft device.

\subsection{Buckling load and buckling mode}
Using the second variation criterion for stability we define the buckling load
as
\begin{equation}
  \label{truebl}
  \Gl^{*}(h)=\inf\{\Gl>0:\Gd^{2}\CE(\BGf;h,\Gl)<0\text{ for some }\BGf\in
  V_{h} \}.
\end{equation}
\begin{definition}
  \label{def:nfb}
We say that the body undergoes a \textbf{near-flip buckling} if $\Gl^{*}(h)>0$
for all $h\in(0,h_{0})$, for some $h_{0}>0$, and $\Gl^{*}(h)\to 0$, as $h\to 0$.
\end{definition}
We refer to \cite{grtr07} for a discussion of why this terminology is appropriate.

The buckling mode is generally understood as the variation $\BGf^{*}_{h}\in
V_{h}\setminus\{0\}$, such that
$\Gd^{2}\CE(\BGf^{*}_{h};h,\Gl^{*}(h))=0$\footnote{The question of existence
  of the buckling mode $\BGf^{*}_{h}$ is irrelevant here, since the goal of this discussion
  is to explain the intuitive meaning of the formal definition of a buckling
  mode, made in Definition~\ref{def:asymbm}.}. However, if we are only interested
in the \emph{asymptotics} of the critical load, as $h\to 0$, then we would not
distinguish between $\Gl^{*}(h)$ and $\Gl(h)$, as long as
$\Gl(h)/\Gl^{*}(h)\to 1$, as $h\to 0$. If we replace $\Gl^{*}(h)$ with
$\Gl_{\Ge}(h)=\Gl^{*}(h)(1+\Ge)$, then we estimate
\[
\Gd^{2}\CE(\BGf^{*}_{h};h,\Gl^{*}(h)(1+\Ge))\approx\Gl^{*}(h)\Ge
\dif{(\Gd^{2}\CE)}{\Gl}(\BGf^{*}_{h};h,\Gl^{*}(h)).
\]
This means that for the purposes of asymptotics we should not distinguish
differences in values of second variation that are infinitesimal, compared to
\[
\Gl^{*}(h)\dif{(\Gd^{2}\CE)}{\Gl}(\BGf^{*}_{h};h,\Gl^{*}(h)).
\]
In keeping with these observations, we redefine the notion of the buckling
load and buckling mode, under the assumption that the body undergoes a
near-flip buckling in the sense of Definition~\ref{def:nfb}.
\begin{definition}
  \label{def:asymbm}
We say that $\Gl(h)\to 0$, as $h\to 0$ is a buckling load if
\begin{equation}
  \label{asymbl}
  \lim_{h\to 0}\frac{\Gl(h)}{\Gl^{*}(h)}=1.
\end{equation}
A \textbf{buckling mode} is a family of variations $\BGf_{h}\in V_{h}\setminus\{0\}$, such that
\begin{equation}
  \label{trubm}
\lim_{h\to 0}\frac{\Gd^{2}\CE(\BGf_{h};h,\Gl^{*}(h))}
{\Gl^{*}(h)\dif{(\Gd^{2}\CE)}{\Gl}(\BGf_{h};h,\Gl^{*}(h))}=0.
\end{equation}
\end{definition}

The most important insight in \cite{grtr07} is that at the critical load
$\Gl^{*}(h)\to 0$, as $h\to 0$, the local material response is well inside the
linearly elastic regime and the instability can be detected by a simpler
\emph{constitutively linearized} second variation:
\begin{equation}
  \label{ccsv}
\Gd^{2}\CE_{cl}(\BGf;h,\Gl)=\int_{\GO_{h}}\{(\SFL_{0}e(\BGf),e(\BGf))+
\Gl(\BGs_{h},\Grad\BGf^{T}\Grad\BGf)\}d\Bx, \qquad\BGf\in V_{h}
\end{equation}
where $e(\BGf)=\hf(\Grad\BGf+(\Grad\BGf)^{T})$ and
\begin{equation}
  \label{minusstress}
 \BGs_{h}(\Bx)=\SFL_{0}e(\Bu^{h}(\Bx))
\end{equation}
is the linear elastic stress. We define
\begin{equation}
\label{compres.measure}
\mathfrak{C}_{h}(\BGf)=\dif{(\Gd^{2}\CE_{cl})}{\Gl}(\BGf;h,\Gl)=
\int_{\GO_{h}}(\BGs_{h},\Grad\BGf^{T}\Grad\BGf)d\Bx.
\end{equation}
Observe that
\begin{equation}
  \label{Adef}
\CA_{h}=\left\{\BGf\in V_{h}:\mathfrak{C}_{h}(\BGf)<0\right\}
\end{equation}
can be regarded as the set of all destabilizing variations for
(\ref{ccsv}). We assume that the applied loading has a compressive nature. In
particular, we assume that the sets $\CA_{h}$ are non-empty for all
$h\in(0,h_{0})$ for some $h_{0}>0$. In parallel with our discussion of the
asymptotics of the critical load and buckling mode we define the functional
\begin{equation}
  \label{Kgen}
\mathfrak{R}(h,\BGf)=-\frac{\int_{\GO_{h}}(\SFL_{0}e(\BGf),e(\BGf))d\Bx}
{\int_{\GO_{h}}(\BGs_{h},\Grad\BGf^{T}\Grad\BGf)d\Bx}=
-\frac{\mathfrak{S}_{h}(\BGf)}{\mathfrak{C}_{h}(\BGf)},
\end{equation}
where
\begin{equation}
\label{stabil.measure}
\mathfrak{S}_{h}(\BGf)=\int_{\GO_{h}}(\SFL_{0}e(\BGf),e(\BGf))d\Bx.
\end{equation}
is the measure of stability of the trivial branch.
The constitutively linearized buckling load and buckling mode are then defined
by analogy with the original second variation.
\begin{definition}
  \label{def:Bload}
The \textbf{constitutively linearized buckling load} $\Hat{\Gl}(h)$ is defined by
\begin{equation}
  \label{clin}
\Hat{\Gl}(h)=\inf_{\BGf\in\CA_{h}}\mathfrak{R}(h,\BGf).
\end{equation}
  We say that the family of variations $\{\BGf_{h}\in\CA_{h}:h\in(0,h_{0})\}$
  is a \textbf{constitutively linearized buckling mode} if
  \begin{equation}
    \label{clbm}
\lim_{h\to 0}\frac{\mathfrak{R}(h,\BGf_{h})}{\Hat{\Gl}(h)}=1.
  \end{equation}
\end{definition}
We now need to prove that under some reasonable assumptions the constitutively
linearized buckling load and buckling mode are buckling mode and buckling
mode, respectively, in the sense of Definition~\ref{def:asymbm}.

Recall the definition of the Korn constant
\begin{equation}
  \label{KK}
  K(V_{h})=\inf_{\BGf\in V_{h}}\frac{\|e(\BGf)\|^{2}}{\|\Grad\BGf\|^{2}}.
\end{equation}
Here and elsewhere in this
paper $\|\cdot\|$ always denotes the $L^{2}$-norm on $\GO_{h}$.
\begin{definition}
  \label{def:slender}
We say that the  body $\GO_{h}$ is \textbf{slender} if
\begin{equation}
  \label{slender}
  \lim_{h\to 0}K(V_{h})=0.
\end{equation}
\end{definition}
We remark that this notion of slenderness, introduced in \cite{grtr07}, is not
purely geometric, but depends on the type of loading described by the subspace
$V_{h}$. On the one hand, a thin rod or a plate in the hard device will not be
regarded as slender, since their Korn constant is 1/2, regardless of their
geometric slenderness. On the other hand, a geometrically non-slender body,
such as a ball or a cube will not be
slender under our definition, for any \bc s that exclude all rigid body motions.

\begin{theorem}[Asymptotics of the critical load]
  \label{th:crit}
Suppose that the body is slender in the sense of
Definition~\ref{def:slender}. Assume that the constitutively linearized
critical load $\Hat{\Gl}(h)$, defined in (\ref{clin}) satisfies
$\Hat{\Gl}(h)>0$ for all sufficiently small $h$ and
\begin{equation}
  \label{sufcond}
  \lim_{h\to 0}\frac{\Hat{\Gl}(h)^{2}}{K(V_{h})}=0.
\end{equation}
Then $\Hat{\Gl}(h)$ is the buckling load and any constitutively linearized
buckling mode $\BGf_{h}$ is a buckling mode in the sense of Definition~\ref{def:asymbm}.
\end{theorem}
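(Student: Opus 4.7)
The plan is a quantitative expansion of $\Gd^{2}\CE$ around the constitutively linearized second variation $\Gd^{2}\CE_{cl}$ whose error is absorbed by the Korn constant $K(V_{h})$, followed by a standard asymptotic matching of critical loads.

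The first step, and the technical heart, is the pointwise identity
$$
(W_{\BF\BF}(\BF)\BGx,\BGx)=(\SFL(\BF)\BGx_{\rm sym},\BGx_{\rm sym})+(W_{\BF}(\BF),\BGx^{T}\BGx)+O(|\BF-\BI|^{2}|\BGx|^{2}),
$$
with $\SFL(\BF)=\SFL_{0}+O(|\BF-\BI|)$, extracted from frame indifference in the spirit of \cite{grtr07}. Substituting $\BF=\Grad\By(\Bx;h,\Gl)$, $\BGx=\Grad\BGf$, and using (\ref{fundass}) together with $W_{\BF}(\Grad\By)=\Gl\BGs_{h}+O(\Gl^{2})$ (which follows from (\ref{reglbda}), (\ref{minusstress}), and (P1)), integration yields
$$
\Gd^{2}\CE(\BGf;h,\Gl)=\mathfrak{S}_{h}(\BGf)+\Gl\mathfrak{C}_{h}(\BGf)+R(\BGf;h,\Gl),\qquad|R|\le C\Gl\,\mathfrak{S}_{h}(\BGf)+C\Gl^{2}\|\Grad\BGf\|^{2}.
$$
Coercivity (\ref{Lcoerc}) and the definition (\ref{KK}) give $\|\Grad\BGf\|^{2}\le\mathfrak{S}_{h}(\BGf)/(\Ga_{\SFL_{0}}K(V_{h}))$, so $|R|\le(C\Gl+C\Gl^{2}/K(V_{h}))\mathfrak{S}_{h}(\BGf)$, which is $o(\mathfrak{S}_{h}(\BGf))$ uniformly on $V_{h}$ for $\Gl\lesssim\Hat{\Gl}(h)$, thanks to the near-flip behavior $\Hat{\Gl}(h)\to 0$ (implied by slenderness and (\ref{sufcond})) and to (\ref{sufcond}) itself. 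An analogous expansion with the same error control holds for $\Md_{\Gl}\Gd^{2}\CE$.

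Fix $\Ge\in(0,1)$. On $\CA_{h}$ one has $\Gd^{2}\CE_{cl}(\BGf;h,\Gl)=\mathfrak{S}_{h}(\BGf)(1-\Gl/\mathfrak{R}(h,\BGf))$, while off $\CA_{h}$ it is $\ge\mathfrak{S}_{h}(\BGf)$. For the upper bound on $\Gl^{*}(h)$, take a constitutively linearized buckling mode $\BGf_{h}$; since $\mathfrak{R}(h,\BGf_{h})/\Hat{\Gl}(h)\to 1$, at $\Gl=(1+\Ge)\Hat{\Gl}(h)$ one has $\Gd^{2}\CE_{cl}(\BGf_{h};h,\Gl)\le-\tfrac{\Ge}{2}\mathfrak{S}_{h}(\BGf_{h})$ for small $h$, which the error bound turns into $\Gd^{2}\CE(\BGf_{h};h,\Gl)<0$, so $\Gl^{*}(h)\le(1+\Ge)\Hat{\Gl}(h)$. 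For the lower bound, at $\Gl\le(1-\Ge)\Hat{\Gl}(h)$ the definition (\ref{clin}) forces $\Gd^{2}\CE_{cl}(\BGf;h,\Gl)\ge\Ge\mathfrak{S}_{h}(\BGf)$ for every $\BGf\in V_{h}$, whence $\Gd^{2}\CE\ge 0$ and $\Gl^{*}(h)\ge(1-\Ge)\Hat{\Gl}(h)$. Together these give (\ref{asymbl}). For the mode claim, the numerator of (\ref{trubm}) with $\BGf=\BGf_{h}$ equals $\mathfrak{S}_{h}(\BGf_{h})(1-\Gl^{*}(h)/\mathfrak{R}(h,\BGf_{h}))+R=o(\mathfrak{S}_{h}(\BGf_{h}))$, while the denominator is $\Gl^{*}(h)\mathfrak{C}_{h}(\BGf_{h})+o(\mathfrak{S}_{h}(\BGf_{h}))$ with $\Gl^{*}(h)|\mathfrak{C}_{h}(\BGf_{h})|=\Gl^{*}(h)\mathfrak{S}_{h}(\BGf_{h})/\mathfrak{R}(h,\BGf_{h})\sim\mathfrak{S}_{h}(\BGf_{h})$, so the ratio vanishes.

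The main obstacle is the pointwise identity in the second paragraph: extracting from frame indifference a decomposition of $(W_{\BF\BF}(\BF)\BGx,\BGx)$ with a \emph{quadratic} remainder in $|\BF-\BI|$. It is precisely the quadratic (rather than merely linear) scaling of that remainder that couples with $\Hat{\Gl}(h)^{2}$ in (\ref{sufcond}); an identity with only linear remainder would force the strictly stronger $\Hat{\Gl}(h)/K(V_{h})\to 0$, which was the original sufficient condition in \cite{grtr07} that the present theorem is designed to improve.
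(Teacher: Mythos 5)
Your overall architecture coincides with the paper's: a basic estimate comparing $\Gd^{2}\CE$ with $\Gd^{2}\CE_{cl}$ (and likewise for their $\Gl$-derivatives) with relative error controlled through $K(V_{h})$, followed by the $\Ge$-matching of $\Gl^{*}(h)$ against $\Hat{\Gl}(h)$ from above and below and the two-sided treatment of the quotient in (\ref{trubm}); those steps are carried out correctly. The gap is in the step you yourself single out as the technical heart. The pointwise identity
\[
(W_{\BF\BF}(\BF)\BGx,\BGx)=(\SFL(\BF)\BGx_{\rm sym},\BGx_{\rm sym})+(W_{\BF}(\BF),\BGx^{T}\BGx)+O(|\BF-\BI|^{2}|\BGx|^{2})
\]
is false. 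Frame indifference gives
$(W_{\BF\BF}(\BF)\BGx,\BGx)=4(\Hat{W}_{\BC\BC}(\BC)(\BF^{T}\BGx),\BF^{T}\BGx)+2(\Hat{W}_{\BC}(\BC),\BGx^{T}\BGx)$,
and since $\Hat{W}_{\BC\BC}$ acts on symmetric matrices, writing $(\BF^{T}\BGx)_{\rm sym}=\BGx_{\rm sym}+((\BF-\BI)^{T}\BGx)_{\rm sym}$ produces the cross term $8(\Hat{W}_{\BC\BC}(\BC)\BGx_{\rm sym},((\BF-\BI)^{T}\BGx)_{\rm sym})$. The matrix $((\BF-\BI)^{T}\BGx)_{\rm sym}$ depends on the skew part of $\BGx$ (take $\BF-\BI=\Gl\,\Be_{3}\otimes\Be_{3}$ and $\BGx=\Be_{1}\otimes\Be_{3}-\Be_{3}\otimes\Be_{1}$: the result is a nonzero symmetric matrix while $\BGx_{\rm sym}=0$ would not detect it in a quadratic form), so this term can neither be absorbed into a modified tensor $\SFL(\BF)$ acting on $\BGx_{\rm sym}$ alone nor be bounded by $|\BF-\BI|^{2}|\BGx|^{2}$; it is genuinely of size $|\BF-\BI|\,|\BGx_{\rm sym}|\,|\BGx|$. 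Consequently your remainder bound $|R|\le C\Gl\,\mathfrak{S}_{h}(\BGf)+C\Gl^{2}\|\Grad\BGf\|^{2}$ is too strong.

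The error is repairable without changing anything downstream. The correct pointwise estimate is
$|(W_{\BF\BF}(\BF)\BGx,\BGx)-(\SFL_{0}\BGx_{\rm sym},\BGx_{\rm sym})-\Gl(\BGs_{h},\BGx^{T}\BGx)|\le C(\Gl|\BGx_{\rm sym}||\BGx|+\Gl^{2}|\BGx|^{2})$,
which after integration, Cauchy--Schwarz and (\ref{KK}) yields the relative error $C(\Gl/\sqrt{K(V_{h})}+\Gl^{2}/K(V_{h}))$ --- this is precisely the paper's basic estimate (Lemma~\ref{leb:be}) --- and since $\Gl/\sqrt{K(V_{h})}=\sqrt{\Gl^{2}/K(V_{h})}\to 0$ under (\ref{sufcond}), your $\Ge$-arguments and the buckling-mode computation go through verbatim. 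Your closing diagnosis should be adjusted accordingly: the improvement over the condition $\Hat{\Gl}(h)/K(V_{h})\to 0$ does not come from making the remainder quadratic in $|\BF-\BI|$ (that cannot be done), but from the fact that the term linear in $|\BF-\BI|$ carries one factor of $|\BGx_{\rm sym}|$ and therefore costs only one square root of the Korn constant.
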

  The theorem is proved by means of the basic estimate, which is a simple
  modification of the estimates in \cite{grtr07} used in the derivation of the
  formula for $\Gd^{2}\CE_{cl}(\BGf;h,\Gl)$:
\begin{lemma}
  \label{leb:be}
Suppose $\By(\Bx;h,\Gl)$ satisfies (\ref{fundass}) and $W(\BF)$ has the
properties (P1)--(P4). Then
\begin{equation}
  \label{basic}
\left|\Gd^{2}\CE(\BGf;h,\Gl)-\Gd^{2}\CE_{cl}(\BGf;h,\Gl)\right|\le
C\left(\frac{\Gl}{\sqrt{K(V_{h})}}+\frac{\Gl^{2}}{K(V_{h})}\right)\mathfrak{S}_{h}(\BGf).
\end{equation}
and
\begin{equation}
  \label{dbasic}
\left|\dif{(\Gd^{2}\CE)}{\Gl}(\BGf;h,\Gl)-\mathfrak{C}_{h}(\BGf)\right|\le
C\left(\frac{1}{\sqrt{K(V_{h})}}+\frac{\Gl}{K(V_{h})}\right)\mathfrak{S}_{h}(\BGf).
\end{equation}
where the constant $C$ is independent of $h$, $\Gl$ and $\BGf$.
\end{lemma}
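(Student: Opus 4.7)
The plan is to Taylor expand the integrand of $\Gd^{2}\CE$ in powers of $\Gl$ about the identity, identify the leading and first--order terms with $\Gd^{2}\CE_{cl}$, and control the residual via (\ref{Lcoerc}) and the Korn constant (\ref{KK}). By (\ref{fundass}) I write $\Grad\By(\Bx;h,\Gl)=\BI+\Gl\Grad\Bu^{h}(\Bx)+\BGe_{h}(\Bx;\Gl)$ with $\|\BGe_{h}\|_{L^{\infty}(\GO_{h})}\le C\Gl^{2}$; since $W$ is smooth near $\BI$ this yields a pointwise expansion
\[
W_{\BF\BF}(\Grad\By(\Bx;h,\Gl))=\SFL_{0}+\Gl\,\SFM(\Bx)+\BGr_{h}(\Bx;\Gl),\qquad \SFM(\Bx)=W_{\BF\BF\BF}(\BI)[\Grad\Bu^{h}(\Bx)],
\]
with $\|\BGr_{h}\|_{L^{\infty}(\GO_{h})}\le C\Gl^{2}$. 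Substituting into $\Gd^{2}\CE(\BGf;h,\Gl)$ splits it into the sum of $\int_{\GO_{h}}(\SFL_{0}\Grad\BGf,\Grad\BGf)\,d\Bx$, a $\Gl$--linear contribution coming from $\SFM$, and a residual bounded by $C\Gl^{2}\|\Grad\BGf\|^{2}$.

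The first summand equals $\mathfrak{S}_{h}(\BGf)$ because property (P4) forces $\SFL_{0}$ to annihilate skew matrices and $\SFL_{0}$ has the major symmetry. The matching of the $\Gl$--linear term with $\Gl\,\mathfrak{C}_{h}(\BGf)$ is the main algebraic content of the proof, and follows from frame indifference (P2): twice differentiating $W(\BF\BR)=W(\BF)$ in $\BF$ and once along a skew direction in $\BR$, together with $W_{\BF}(\BI)=\Bzr$ from (P1), produces the pointwise identity
\[
(W_{\BF\BF\BF}(\BI)[\BA]\BGx,\BGx)=2(\SFL_{0}\BA_{\rm sym},\BGx^{T}\BGx)+\CQ(\BA_{\rm sym},\BGx_{\rm sym}),
\]
for every $\BA,\BGx\in\bb{R}^{3\times 3}$, where $\CQ$ is a bilinear form in its two symmetric arguments satisfying $|\CQ(\BA_{\rm sym},\BGx_{\rm sym})|\le C|\BA_{\rm sym}||\BGx_{\rm sym}|$. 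This is exactly the computation already performed in \cite{grtr07} en route to the definition of $\Gd^{2}\CE_{cl}$. Applying it with $\BA=\Grad\Bu^{h}$ and $\BGx=\Grad\BGf$, and using (\ref{minusstress}), yields
\[
\Gl\int_{\GO_{h}}(\SFM(\Bx)\Grad\BGf,\Grad\BGf)\,d\Bx=\Gl\,\mathfrak{C}_{h}(\BGf)+R_{h}(\BGf),\qquad |R_{h}(\BGf)|\le C\Gl\,\|e(\BGf)\|\cdot\|\Grad\BGf\|.
\]

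To pass from these pointwise bounds to (\ref{basic}), I combine (\ref{Lcoerc}) with (\ref{KK}): $\|e(\BGf)\|^{2}\le \mathfrak{S}_{h}(\BGf)/\Ga_{\SFL_{0}}$ and $\|\Grad\BGf\|^{2}\le \|e(\BGf)\|^{2}/K(V_{h})\le \mathfrak{S}_{h}(\BGf)/(\Ga_{\SFL_{0}}K(V_{h}))$. Consequently $R_{h}$ is bounded by $C\Gl\,\mathfrak{S}_{h}(\BGf)/\sqrt{K(V_{h})}$ and the quadratic residual coming from $\BGr_{h}$ by $C\Gl^{2}\,\mathfrak{S}_{h}(\BGf)/K(V_{h})$; summing yields (\ref{basic}). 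The estimate (\ref{dbasic}) is obtained by differentiating the integrand of $\Gd^{2}\CE$ once in $\Gl$, which is legitimized by (\ref{reglbda}), and repeating the same Taylor identification; each error term then loses one factor of $\Gl$, explaining the modified coefficient $1/\sqrt{K(V_{h})}+\Gl/K(V_{h})$.

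The main obstacle --- and really the only nontrivial step --- is the frame--indifference identity above: one must produce precisely the $\Grad\BGf^{T}\Grad\BGf$ structure appearing in $\mathfrak{C}_{h}$ and separate it cleanly from the lower--order $\CQ$--remainder that depends only on the symmetric parts, so that the mixed term $\|e(\BGf)\|\cdot\|\Grad\BGf\|$ is correctly assigned the scaling $1/\sqrt{K(V_{h})}$ while the pure $\|\Grad\BGf\|^{2}$ contribution carries $1/K(V_{h})$. Because this identity is the very same one that underlies the definition of $\Gd^{2}\CE_{cl}$ in \cite{grtr07}, the present lemma is genuinely only a ``simple modification'' of the estimates established there, promoting an asymptotic statement to the explicit quantitative bounds (\ref{basic})--(\ref{dbasic}) needed in the proof of Theorem~\ref{th:crit}.
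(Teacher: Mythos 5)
Your proposal follows essentially the same route as the paper's proof: both exploit frame indifference via $W(\BF)=\Hat{W}(\BF^{T}\BF)$ to get a pointwise identity for $(W_{\BF\BF}(\BF)\BGx,\BGx)$, expand about $\BF=\BI$ using (\ref{fundass}), identify the leading terms with $(\SFL_{0}\BGx_{\rm sym},\BGx_{\rm sym})+\Gl(\BGs_{h},\BGx^{T}\BGx)$, bound the pointwise remainder by $C(\Gl|\BGx_{\rm sym}||\BGx|+\Gl^{2}|\BGx|^{2})$, and convert to (\ref{basic}) with (\ref{Lcoerc}) and the Korn constant; (\ref{dbasic}) is likewise obtained by differentiating in $\Gl$ using (\ref{reglbda}). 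One caveat: your displayed third-derivative identity is not correct as written. Differentiating $(W_{\BF\BF}(\BF)\BGx,\BGx)=4(\Hat{W}_{\BC\BC}(\BC)(\BF^{T}\BGx),\BF^{T}\BGx)+2(\Hat{W}_{\BC}(\BC),\BGx^{T}\BGx)$ along $\BF=\BI+t\BA$ at $t=0$ gives
\begin{equation*}
(W_{\BF\BF\BF}(\BI)[\BA]\BGx,\BGx)=(\SFL_{0}\BA_{\rm sym},\BGx^{T}\BGx)
+2(\SFL_{0}\BGx_{\rm sym},\BA^{T}\BGx)
+8(\Hat{W}_{\BC\BC\BC}(\BI)[\BA_{\rm sym}]\BGx_{\rm sym},\BGx_{\rm sym}),
\end{equation*}
so the coefficient of $(\SFL_{0}\BA_{\rm sym},\BGx^{T}\BGx)$ is $1$, not $2$ (with $2$ you would identify the linear term with $2\Gl\mathfrak{C}_{h}(\BGf)$ rather than $\Gl\mathfrak{C}_{h}(\BGf)$, and the excess $\Gl\mathfrak{C}_{h}$ is only $O(\Gl\mathfrak{S}_{h}/K(V_{h}))$, which would destroy (\ref{basic})). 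Moreover the remainder is \emph{not} a bilinear form in the symmetric parts alone: the middle term involves the full gradient through $\BA^{T}\BGx$, and it is precisely this term that forces the mixed bound $C\Gl\|e(\BGf)\|\,\|\Grad\BGf\|$ you (correctly) assert for $R_{h}$ --- note that your stated $\CQ(\BA_{\rm sym},\BGx_{\rm sym})$ would instead yield only $C\Gl\|e(\BGf)\|^{2}$, which is inconsistent with your own subsequent estimate. With the identity corrected, the remainder accounting and the passage to (\ref{basic})--(\ref{dbasic}) via $\|\Grad\BGf\|^{2}\le\|e(\BGf)\|^{2}/K(V_{h})$ coincide with the paper's argument.
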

\begin{proof}
According to the frame indifference property (P2),
$W(\BF)=\Hat{W}(\BF^{T}\BF)$. Differentiating this formula twice we obtain
\begin{equation}
  \label{svobj}
(W_{\BF\BF}(\BF)\BGx,\BGx)=4(\Hat{W}_{\BC\BC}(\BC)(\BF^{T}\BGx),\BF^{T}\BGx)+
2(\Hat{W}_{\BC}(\BC),\BGx^{T}\BGx),\qquad\BC=\BF^{T}\BF.
\end{equation}
We can estimate
\[
|(\Hat{W}_{\BC\BC}(\BC)(\BF^{T}\BGx),\BF^{T}\BGx)-(\Hat{W}_{\BC\BC}(\BI)\BGx,\BGx)|\le
|(\Hat{W}_{\BC\BC}(\BC)(\BF^{T}-\BI)\BGx,(\BF^{T}-\BI)\BGx)|+
\]
\[
|((\Hat{W}_{\BC\BC}(\BC)-\Hat{W}_{\BC\BC}(\BI))\BGx,\BGx)|+
2|(\Hat{W}_{\BC\BC}(\BC)\BGx,(\BF^{T}-\BI)\BGx)|
\]
When $\BF$ is uniformly bounded we obtain
\[
|(\Hat{W}_{\BC\BC}(\BC)(\BF^{T}\BGx),\BF^{T}\BGx)-(\Hat{W}_{\BC\BC}(\BI)\BGx,\BGx)|\le
C\left(|\BF-\BI|^{2}|\BGx|^{2}+|\BC-\BI||\BGx_{\rm sym}|^{2}+|\BF-\BI||\BGx_{\rm sym}||\BGx|
\right).
\]
Similarly,
\[
|(\Hat{W}_{\BC}(\BC)-\Hat{W}_{\BC\BC}(\BI)(\BC-\BI),\BGx^{T}\BGx)|\le
C|\BC-\BI|^{2}|\BGx|^{2}
\]
When $\BF=\Grad\By(\Bx;h,\Gl)$ and $\BGx=\Grad\BGf$ we obtain, taking into
account (\ref{fundass}), that
\[
|\BF-\BI|\le C\Gl,\qquad|\BC-\BI|\le C\Gl.
\]
Observing that
\[
4\Hat{W}_{\BC\BC}(\BI)=W_{\BF\BF}(\BI)=\SFL_{0},\qquad|\BC-\BI-2\Gl e(\Bu^{h})|\le C\Gl^{2}.
\]
we obtain the estimate
\[
|(W_{\BF\BF}(\BF)\BGx,\BGx)-(\SFL_{0}\BGx_{\rm sym},\BGx_{\rm sym})-\Gl(\BGs_{h},\BGx^{T}\BGx)|\le
C(\Gl|\BGx_{\rm sym}||\BGx|+\Gl^{2}|\BGx|^{2}).
\]
Integrating over $\GO_{h}$ as using the coercivity (\ref{Lcoerc}) of
$\SFL_{0}$ we obtain the estimate (\ref{basic}).

In order to prove the estimate (\ref{dbasic}) we substitute
$\BF=\Grad\By(\Bx;h,\Gl)$ and $\BGx=\Grad\BGf$ into (\ref{svobj}) and
differentiate in $\Gl$, obtaining
\[
\dif{(W_{\BF\BF}(\BF)\BGx,\BGx)}{\Gl}=
4((\Hat{W}_{\BC\BC\BC}(\BC)\dot{\BC})(\BF^{T}\BGx),\BF^{T}\BGx)+
8(\Hat{W}_{\BC\BC}(\BC)(\BF^{T}\BGx),\dot{\BF}^{T}\BGx)+
2(\Hat{W}_{\BC\BC}(\BC)\dot{\BC},\BGx^{T}\BGx),
\]
where $\dot{\BC}$ and $\dot{\BF}$ denote differentiation with respect to $\Gl$.
Using the uniform boundedness of $\dot{\BC}$, which is a corollary of
(\ref{reglbda}), as well as (\ref{fundass}) we estimate
\[
|((\Hat{W}_{\BC\BC\BC}(\BC)\dot{\BC})(\BF^{T}\BGx),\BF^{T}\BGx)|\le
C(|\BGx_{\rm sym}|^{2}+\Gl|\BGx||\BGx_{\rm sym}|).
\]
and
\[
|(\Hat{W}_{\BC\BC}(\BC)(\BF^{T}\BGx),\dot{\BF}^{T}\BGx)|\le
C(|\BGx||\BGx_{\rm sym}|+\Gl|\BGx|^{2}).
\]
We also estimate, using $|\BC-\BI|\le C\Gl$ and $|\dot{\BC}-2e(\Bu^{h})|\le
C\Gl$, that are consequences of (\ref{fundass}) and (\ref{reglbda}):
\[
|2(\Hat{W}_{\BC\BC}(\BC)\dot{\BC},\BGx^{T}\BGx)-(\BGs_{h},\BGx^{T}\BGx)|\le C\Gl|\BGx|^{2}.
\]
\end{proof}

\begin{proof}[Proof of Theorem~\ref{th:crit}]
By definition of $\Hat{\Gl}(h)$,  for any
 $\Ge>0$ and any $h\in (0,h_{0})$ there exists
$\BGf_h\in \CA_h$ such that
\begin{equation}
\label{1+epsilon}
\mathfrak{S}_{h}(\BGf_{h})+\Hat{\Gl}(h)(1+\epsilon)\mathfrak{C}_{h}(\BGf_{h})<0,
\end{equation}
thus,
\[
\Gd^{2}\CE_{cl}(\BGf_{h};h,\Hat{\Gl}(h)(1+2\Ge))\le-\frac{\Ge\mathfrak{S}_{h}(\BGf_{h})}{1+\Ge}.
\]
The estimate
(\ref{basic}) gives the upper bound on the second variation:
\[
\Gd^{2}\CE(\BGf_h;h,\Hat{\Gl}(h)(1+2\Ge))\leq\left(-\frac{\epsilon}{(1+\epsilon)}+
C\left(\frac{\Hat{\Gl}(h)}{\sqrt{K(V_{h})}}+\frac{\Hat{\Gl}(h)^{2}}{K(V_{h})}\right)
\right)\mathfrak{S}_{h}(\BGf_{h}),
\]
Thus, due to (\ref{sufcond}), for sufficiently small $h$, we have
$\Gd^{2}\CE(\BGf_h;h,\Hat{\Gl}(h)(1+2\Ge))<0$, and hence
$\Gl^{*}(h)\le\Hat{\Gl}(h)(1+2\Ge)$. We conclude that
\begin{equation}
  \label{limsup}
\lims_{h\to 0}\frac{\Gl^{*}(h)}{\Hat{\Gl}(h)}\le 1.
\end{equation}

To prove the opposite inequality we observe that
by definition of $\Hat{\Gl}(h)$ we have
\[
\mathfrak{S}_{h}(\BGf)+\Hat{\Gl}(h)\mathfrak{C}_{h}(\BGf)\ge 0
\]
for any $\BGf\in V_{h}$. Therefore, for any $\Ge>0$ and any
$0<\Gl\le\Hat{\Gl}(h)(1-\Ge))$ we have
\[
\Gd^{2}\CE_{cl}(\BGf;h,\Gl)\ge\Ge\mathfrak{S}_{h}(\BGf).
\]
The estimate (\ref{basic}) now gives the lower bound on the second variation:
\[
\Gd^{2}\CE(\BGf;h,\Gl)\ge\left(\epsilon-
C\left(\frac{\Hat{\Gl}(h)}{\sqrt{K(V_{h})}}+\frac{\Hat{\Gl}(h)^{2}}{K(V_{h})}\right)
\right)\mathfrak{S}_{h}(\BGf),
\]
Thus for all sufficiently small $h$ and all $\BGf\in V_{h}\setminus\{0\}$ we
have $\Gd^{2}\CE(\BGf;h,\Gl)>0$ for all
$0<\lambda\leq\Hat{\Gl}(h)(1-\epsilon)$, which means that
$\Gl^{*}(h)\ge\Hat{\Gl}(h)(1-\epsilon)$. This implies
\begin{equation}
  \label{liminf}
\limi_{h\to 0}\frac{\Gl^{*}(h)}{\Hat{\Gl}(h)}\ge 1.
\end{equation}
Combining (\ref{limsup}) and (\ref{liminf}) we conclude that $\Hat{\Gl}(h)$ is
the buckling load.

Assume now that $\BGf_h$ is a constitutively linearized buckling mode,
i.e. (\ref{clbm}) holds. Set $\Gl=\Gl^{*}(h)$ and $\BGf=\BGf_h$ in the inequality
(\ref{basic}).
Then, dividing both sides of the inequality by
$-\Gl^{*}(h)\mathfrak{C}_{h}(\BGf_{h})>0$ we obtain
\[
\left|\frac{\Gd^{2}\CE(\BGf_{h};h,\Gl^{*}(h))}{-\Gl^{*}(h)\mathfrak{C}_{h}(\BGf_{h})}-
\left(\frac{\mathfrak{R}(h,\BGf_{h})}{\Gl^{*}(h)}-1\right)\right|\le
C\left(\frac{\Gl^{*}(h)}{\sqrt{K(V_{h})}}+\frac{(\Gl^{*}(h))^{2}}{K(V_{h})}\right)
\frac{\mathfrak{R}(h,\BGf_{h})}{\Gl^{*}(h)}.
\]
Since we have proved that $\Hat{\Gl}(h)$ is the buckling load we conclude that
\[
\lim_{h\to 0}\frac{\Gd^{2}\CE(\BGf_{h};h,\Gl^{*}(h))}{\Gl^{*}(h)\mathfrak{C}_{h}(\BGf_{h})}=0.
\]
Similarly, setting $\Gl=\Gl^{*}(h)$ and $\BGf=\BGf_h$ in the inequality
(\ref{dbasic}) and dividing both sides of the inequality by
$-\mathfrak{C}_{h}(\BGf_{h})>0$ we obtain
\[
\left|\frac{\dif{(\Gd^{2}\CE)}{\Gl}(\BGf_{h};h,\Gl^{*}(h))}{-\mathfrak{C}_{h}(\BGf_{h})}
+1\right|\le
C\left(\frac{\Gl^{*}(h)}{\sqrt{K(V_{h})}}+\frac{(\Gl^{*}(h))^{2}}{K(V_{h})}\right)
\frac{\mathfrak{R}(h,\BGf_{h})}{\Gl^{*}(h)}.
\]
We conclude that
\[
\lim_{h\to 0}\frac{\dif{(\Gd^{2}\CE)}{\Gl}(\BGf_{h};h,\Gl^{*}(h))}{\mathfrak{C}_{h}(\BGf_{h})}=1.
\]
It follows now that $\BGf_{h}$ satisfies (\ref{trubm}), and the theorem is proved.
\end{proof}
An immediate consequence of our Rayleigh quotient characterization of buckling
load (\ref{clin}) is the \emph{safe load} estimate:
\[
\Hat{\Gl}(h)=\inf_{\BGf\in\CA_{h}}\mathfrak{R}(h,\BGf)\ge\inf_{\BGf\in V_{h}}
\frac{\mathfrak{S}_{h}(\BGf)}{\|\BGs_{h}\|_{\infty}\|\nabla\BGf\|^{2}}=
\frac{K_{\SFL_{0}}(V_{h})}{\|\BGs_{h}\|_{\infty}},
\]
where
\[
K_{\SFL_{0}}(V_{h})=\inf_{\BGf\in V_{h}}
\frac{\int_{\GO_{h}}(\SFL_{0}e(\BGf),e(\BGf))d\Bx}{\|\Grad\BGf\|^{2}}.
\]
We remark that rods and plates have buckling loads proportional to
$K_{\SFL_{0}}(V_{h})$, while the theoretical buckling load for axially
compressed circular cylindrical shells is much higher. Based on this, we
conjecture that buckling loads that scale with $K(V_{h})$ should not exhibit
sensitivity to imperfections.

\subsection{Buckling equivalence}
\label{sub:be}
In the previous subsection we showed that the asymptotics of the critical load
and buckling mode can be captured by a constitutively linearized functional
$\mathfrak{R}(h,\BGf)$. Even though such a characterization of buckling
represents a significant simplification, compared to the characterization
based on the second variation of a fully non-linear energy functional, further
simplifications may be necessary to obtain explicit analytic expressions in
specific problems.  We envision two ways in
which the analysis of buckling can be simplified. One is the simplification of
the functional $\mathfrak{R}(h,\BGf)$. The other is replacing the space of all
admissible functions $\CA_{h}$ with a smaller space $\CB_{h}$. For example, we
may want to use a specific ansatz, like the Kirchhoff ansatz in buckling of
rods and plates. In order to formalize our simplification procedure we make
the following definitions.
\begin{definition}
  \label{def:equiv}
  Assume that $J(h,\BGf)$ is a variational functional defined on
  $\CB_h\subset\CA_{h}$. We say that the pair $(\CB_h, J(h,\BGf))$
  \textbf{characterizes buckling} if the following three conditions are
  satisfied
\begin{enumerate}
\item[(a)] Characterization of the buckling load:
If
\[
\Gl(h)=\inf_{\BGf\in\CB_{h}}J(h,\BGf),
\]
then $\Gl(h)$ is a buckling load in the sense of Definition~\ref{def:asymbm}.
\item[(b)] Characterization of the buckling mode:
If $\BGf_{h}\in\CB_{h}$ is a buckling mode in the sense of Definition~\ref{def:asymbm}, then
  \begin{equation}
    \label{bmode}
\lim_{h\to 0}\frac{J(h,\BGf_{h})}{\Gl(h)}=1.
  \end{equation}
\item[(c)] Faithful representation of the buckling mode:
If $\BGf_{h}\in\CB_{h}$ satisfies (\ref{bmode}) then it is a buckling mode.
\end{enumerate}
\end{definition}
\begin{definition}
  \label{def:Bequivalence}
  Two pairs $(\CB_h, J_{1}(h,\BGf))$ and $(\CC_h, J_{2}(h,\BGf))$ are called
  \textbf{buckling equivalent} if the pair $(\CB_h,J_{1}(h,\BGf))$ characterizes buckling if
  and only if $(\CC_h,J_2(h,\BGf))$ does.
\end{definition}
The notion of buckling equivalence of \emph{functionals} $(\CB_h, J(h,\BGf))$
is an extension of B-equivalence, introduced in \cite{grtr07}, in that it also
captures buckling modes in addition to buckling loads.

Let us first address a simple question of restricting the space of functions
$\CB_{h}$ to an ``ansatz'' $\CC_{h}$.
\begin{lemma}
\label{lem:pairB_hJ}
Suppose the pair $(\CB_{h},J(h,\BGf))$ characterizes buckling.
Let $\CC_{h}\subset\CB_{h}$ be such that it contains a buckling mode. Then the
pair $(\CC_{h},J(h,\BGf))$ characterizes buckling.
\end{lemma}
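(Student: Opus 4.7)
The plan is to verify each of the three conditions (a), (b), (c) of Definition~\ref{def:equiv} for the pair $(\CC_h,J(h,\BGf))$ by leveraging the same conditions already known for $(\CB_h,J(h,\BGf))$. I would introduce
\[
\Gl_{\CB}(h)=\inf_{\BGf\in\CB_h}J(h,\BGf),\qquad \Gl_{\CC}(h)=\inf_{\BGf\in\CC_h}J(h,\BGf),
\]
so that the inclusion $\CC_h\subset\CB_h$ immediately gives $\Gl_{\CB}(h)\le\Gl_{\CC}(h)$ for every $h$.

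The crux of the argument is to upgrade this one-sided inequality to the asymptotic equivalence $\Gl_{\CC}(h)/\Gl_{\CB}(h)\to 1$ as $h\to 0$. This is the only place where the hypothesis that $\CC_h$ contains a buckling mode is used: taking such a mode $\BGy_h\in\CC_h\subset\CB_h$ and applying condition (b) of the characterization for $(\CB_h,J)$ to it yields $J(h,\BGy_h)/\Gl_{\CB}(h)\to 1$; combining with $\Gl_{\CC}(h)\le J(h,\BGy_h)$ closes the gap from above. Once this equivalence is in hand, the fact that $\Gl_{\CB}(h)$ is a buckling load (condition (a) for $(\CB_h,J)$) transfers to $\Gl_{\CC}(h)$ by the multiplicative nature of (\ref{asymbl}), establishing (a) for $(\CC_h,J)$.

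Conditions (b) and (c) for $(\CC_h,J)$ then reduce to simple bookkeeping using the inclusion $\CC_h\subset\CB_h$ together with the ratio $\Gl_{\CC}(h)/\Gl_{\CB}(h)\to 1$. For (b), any buckling mode $\BGf_h\in\CC_h$ also lies in $\CB_h$, so condition (b) for $(\CB_h,J)$ gives $J(h,\BGf_h)/\Gl_{\CB}(h)\to 1$, and dividing by the above ratio yields (\ref{bmode}) relative to $\Gl_{\CC}(h)$. For (c), if $\BGf_h\in\CC_h$ satisfies (\ref{bmode}) relative to $\Gl_{\CC}(h)$, the same ratio promotes this to $J(h,\BGf_h)/\Gl_{\CB}(h)\to 1$, and condition (c) for $(\CB_h,J)$ then certifies $\BGf_h$ as a buckling mode.

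The main ``obstacle'' is really just identifying the single essential step: the explicit buckling mode sitting inside $\CC_h$ is exactly what prevents the infimum over the smaller set from drifting asymptotically above that over $\CB_h$, and without it the conclusion would genuinely fail. Everything else is a routine ratio manipulation, with no new estimates or compactness arguments needed beyond what is already packaged in Definition~\ref{def:equiv}.
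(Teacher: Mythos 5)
Your proposal is correct and follows essentially the same route as the paper: the one-sided inequality $\Gl_{\CB}(h)\le\Gl_{\CC}(h)$ from the inclusion, the use of the buckling mode inside $\CC_h$ together with condition (b) for $(\CB_h,J)$ to force $\Gl_{\CC}(h)/\Gl_{\CB}(h)\to 1$, and then the transfer of (a), (b), (c) by ratio manipulation. No gaps.
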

\begin{proof}
Let
\[
\Gl(h)=\inf_{\BGf\in\CB_{h}}J(h,\BGf),\qquad\Tld{\Gl}(h)=\inf_{\BGf\in\CC_{h}}J(h,\BGf).
\]
Then, clearly, $\Tld{\Gl}(h)\ge\Gl(h)$. By assumption there exists a
buckling mode $\BGf_{h}\in\CC_{h}\subset\CB_{h}$. Therefore,
\[
\lims_{h\to 0}\frac{\Tld{\Gl}(h)}{\Gl(h)}\le
\lim_{h\to 0}\frac{J(h,\BGf_{h})}{\Gl(h)}=1,
\]
since the pair $(\CB_{h},J(h,\BGf))$ characterizes buckling. Hence
\begin{equation}
  \label{adeq}
 \lim_{h\to 0}\frac{\Tld{\Gl}(h)}{\Gl(h)}=1,
\end{equation}
and part (a) of Definition~\ref{def:equiv} is established.

If $\BGf_{h}\in\CC_{h}\subset\CB_{h}$ is a buckling mode then
\[
\lim_{h\to 0}\frac{J(h,\BGf_{h})}{\Gl(h)}=1,
\]
since the pair $(\CB_{h},J(h,\BGf))$ characterizes buckling. Part (b) now
follows from (\ref{adeq}).

Finally, if $\BGf_{h}\in\CC_{h}$ satisfies
\[
\lim_{h\to 0}\frac{J(h,\BGf_{h})}{\Tld{\Gl}(h)}=1,
\]
then, $\BGf_{h}\in\CB_{h}$ and by (\ref{adeq}) we also have
\[
\lim_{h\to 0}\frac{J(h,\BGf_{h})}{\Gl(h)}=1.
\]
Therefore, $\BGf_{h}$ is a buckling mode. The Lemma is proved now.
\end{proof}
Our key tool for simplification of the functionals $J(h,\BGf)$ characterizing
buckling is the following theorem.
\begin{theorem}[Buckling equivalence]
\label{th:Bequivalence}
Suppose that $\Gl(h)$ is a buckling load in the sense of Definition~\ref{def:asymbm}.
If either
\begin{equation}
\label{J1J2}
\lim_{h\to 0}\Gl(h)\sup_{\BGf\in\CB_{h}}\left|\frac{1}{J_1(h,\BGf)}-\frac{1}{J_2(h,\BGf)}\right|=0,
\end{equation}
or
\begin{equation}
\label{J2J1}
\lim_{h\to 0}\nth{\Gl(h)}\sup_{\BGf\in\CB_{h}}|J_1(h,\BGf)-J_2(h,\BGf)|=0,
\end{equation}
then the pairs $(\CB_h, J_1(h,\BGf))$ and $(\CB_h, J_2(h,\BGf))$ are
buckling equivalent in the sense of Definition~\ref{def:Bequivalence}.
\end{theorem}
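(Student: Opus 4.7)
The strategy is to prove the forward direction---if $(\CB_h, J_1(h,\BGf))$ characterizes buckling then $(\CB_h, J_2(h,\BGf))$ does---since both hypotheses (\ref{J1J2}) and (\ref{J2J1}) are symmetric in $J_1,J_2$. Writing $\Gl_i(h)=\inf_{\BGf\in\CB_h}J_i(h,\BGf)$ for $i=1,2$, part~(a) of Definition~\ref{def:equiv} applied to the $J_1$-pair says that $\Gl_1(h)$ is a buckling load, so $\Gl_1(h)/\Gl^{*}(h)\to 1$ and hence $\Gl_1(h)/\Gl(h)\to 1$. The main step is to show $\Gl_2(h)/\Gl(h)\to 1$; parts (b) and (c) of Definition~\ref{def:equiv} for the $J_2$-pair will then follow from the same estimate evaluated along a chosen sequence.

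Under hypothesis (\ref{J2J1}), write $|J_1(h,\BGf)-J_2(h,\BGf)|\le\Ge(h)\Gl(h)$ uniformly on $\CB_h$ with $\Ge(h)\to 0$. Given $\eta>0$, I pick $\BGf_h\in\CB_h$ with $J_1(h,\BGf_h)\le\Gl_1(h)(1+\eta)$; then
\[
\Gl_2(h)\le J_2(h,\BGf_h)\le \Gl_1(h)(1+\eta)+\Ge(h)\Gl(h),
\]
and the matching lower bound comes from the same argument with the roles of $J_1,J_2$ swapped. Dividing by $\Gl(h)$ and sending $h\to 0$ then $\eta\to 0$ yields $\Gl_2(h)/\Gl(h)\to 1$. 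Under hypothesis (\ref{J1J2}), one uses that $J_i>0$ on $\CB_h$ for all small $h$ (asymptotic positivity of $\Gl_1(h)$ is part of being a buckling load, and for $i=2$ it follows a posteriori), so that $\sup_{\BGf\in\CB_h}1/J_i(h,\BGf)=1/\Gl_i(h)$, and the hypothesis gives $|\Gl(h)/\Gl_1(h)-\Gl(h)/\Gl_2(h)|\le\Ge(h)\to 0$, again producing $\Gl_2(h)/\Gl(h)\to 1$. Either way, $\Gl_2(h)$ is a buckling load, establishing part~(a) for $(\CB_h,J_2)$.

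For parts~(b) and (c), I claim that for any sequence $\BGf_h\in\CB_h$,
\[
\frac{J_1(h,\BGf_h)}{\Gl_1(h)}\to 1 \quad\Longleftrightarrow\quad \frac{J_2(h,\BGf_h)}{\Gl_2(h)}\to 1.
\]
Under (\ref{J2J1}) this is immediate from $J_2(h,\BGf_h)=J_1(h,\BGf_h)+O(\Ge(h)\Gl(h))$ combined with $\Gl_1(h)\sim\Gl_2(h)\sim\Gl(h)$. Under (\ref{J1J2}) the same reasoning applied to $\Gl(h)/J_i(h,\BGf_h)$, whose difference is bounded by $\Ge(h)\to 0$, yields the equivalence. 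Given this, part~(b) for $J_2$ follows from part~(b) for $J_1$, applied to any buckling mode. For (c), any $\BGf_h\in\CB_h$ with $J_2(h,\BGf_h)/\Gl_2(h)\to 1$ satisfies $J_1(h,\BGf_h)/\Gl_1(h)\to 1$ by the equivalence, so part~(c) for the $J_1$-pair forces $\BGf_h$ to be a buckling mode.

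The only real subtlety is bookkeeping between the four asymptotically equivalent quantities $\Gl^{*}(h)$, $\Gl(h)$, $\Gl_1(h)$, $\Gl_2(h)$: only after (a) is proved do we know $\Gl_2(h)\sim\Gl(h)$, so it is essential to prove (a) using only $\Gl_1(h)\sim\Gl(h)$, and only afterwards invoke $\Gl_2(h)\sim\Gl(h)$ when dividing by $\Gl_2(h)$ in (b) and (c). Beyond this, no genuine difficulty arises, as everything reduces to the elementary $\varepsilon/\eta$ manipulations above.
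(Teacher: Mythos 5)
Your proposal is correct and follows essentially the same route as the paper: establish part (a) by comparing the infima $\Gl_1(h)$, $\Gl_2(h)$ through the uniform bounds in (\ref{J1J2}) or (\ref{J2J1}), then derive parts (b) and (c) from the pointwise equivalence of the normalized quotients $J_i(h,\BGf_h)/\Gl_i(h)$ along arbitrary sequences. Your explicit remark that identifying $\sup_{\BGf}1/J_i$ with $1/\Gl_i(h)$ in the (\ref{J1J2}) case requires positivity of $J_i$ on $\CB_h$ is a point the paper uses tacitly, but otherwise the two arguments coincide.
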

\begin{proof}
Let us introduce the following notation:
\[
\Gl_i(h)=\inf_{\BGf\in\CB_h}J_i(h,\BGf),\quad i=1,2.
\]
\[
\Gd_{1}(h)=\Gl(h)\sup_{\BGf\in\CB_{h}}\left|\frac{1}{J_1(h,\BGf)}-\frac{1}{J_2(h,\BGf)}\right|.
\]
\[
\Gd_{2}(h)=\nth{\Gl(h)}\sup_{\BGf\in\CB_{h}}|J_1(h,\BGf)-J_2(h,\BGf)|.
\]
Then
\[
\left|\frac{\Gl(h)}{\Gl_1(h)}-\frac{\Gl(h)}{\Gl_2(h)}\right|=
\Gl(h)\left|\sup_{\BGf\in\CB_{h}}\frac{1}{J_1(h,\BGf)}
-\sup_{\BGf\in\CB_{h}}\frac{1}{J_2(h,\BGf)}\right|\le\Gd_{1}(h)
\]
and
\[
\frac{|\Gl_1(h)-\Gl_{2}(h)|}{\Gl(h)}=
\nth{\Gl(h)}\left|\inf_{\BGf\in\CB_{h}}J_1(h,\BGf)
-\inf_{\BGf\in\CB_{h}}J_2(h,\BGf)\right|\le\Gd_{2}(h)
\]
Assume that $(\CB_h, J_1(h,\BGf))$ characterizes buckling. Then we have just proved
that if either $\Gd_{1}(h)\to 0$ or $\Gd_{2}(h)\to 0$, as $h\to 0$, then
$\Gl_2(h)/\Gl(h)\to 1$, as $h\to 0$, and condition (a) in
Definition~\ref{def:equiv} is proved for $J_{2}(h,\BGf)$.

Observe that by parts (b) and (c) of Definition~\ref{def:equiv}
$\BGf_{h}\in\CB_{h}$ is the buckling mode \IFF
\[
\lim_{h\to 0}\frac{J_{1}(h,\BGf_{h})}{\Gl_{1}(h)}=1.
\]
This is equivalent to
\[
\lim_{h\to 0}\frac{\Gl(h)}{J_{1}(h,\BGf_{h})}=1.
\]
Therefore,
\[
\lim_{h\to 0}\frac{J_{2}(h,\BGf_{h})}{\Gl(h)}=1,
\]
since either
\[
\left|\frac{\Gl(h)}{J_{1}(h,\BGf_{h})}-\frac{\Gl(h)}{J_{2}(h,\BGf_{h})}\right|\le\Gd_{1}(h)
\]
or
\[
\frac{|J_{1}(h,\BGf_{h})-J_{2}(h,\BGf_{h})|}{\Gl(h)}\le\Gd_{2}(h)
\]
Thus, in view of part (a), $\BGf_{h}$ is a buckling mode \IFF
\[
\lim_{h\to 0}\frac{J_{2}(h,\BGf_{h})}{\Gl_{2}(h)}=1.
\]
\end{proof}

As an application of Theorem~\ref{th:Bequivalence} we show that we can
simplify the Rayleigh quotient $\mathfrak{R}(h,\BGf)$ further.
\begin{theorem}
  \label{th:rqs}
Suppose that the critical load $\Hat{\Gl}(h)$ satisfies (\ref{sufcond}). Let
\[
\mathfrak{R}_{0}(h,\BGf)=-\frac{\int_{\GO_{h}}(\SFL_{0}e(\BGf),e(\BGf))d\Bx}
{\nth{4}\int_{\GO_{h}}(\Tld{\BGs}_{h}\Curl\BGf,\Curl\BGf)d\Bx}=
-\frac{\mathfrak{S}_{h}(\BGf)}{\mathfrak{C}_{h}^{0}(\BGf)},
\]
where
\begin{equation}
  \label{compten}
\Tld{\BGs}_{h}=(\Trc\BGs_{h})\BI-\BGs_{h}
\end{equation}
is the compression tensor.
Then
$(\CA_{h},\mathfrak{R}(h,\BGf))$ and $(\CA_{h},\mathfrak{R}_{0}(h,\BGf))$ are
buckling equivalent.
\end{theorem}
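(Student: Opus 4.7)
The plan is to apply the buckling equivalence criterion (Theorem~\ref{th:Bequivalence}) with $\Gl(h)=\Hat{\Gl}(h)$---which is a buckling load by Theorem~\ref{th:crit} under hypothesis (\ref{sufcond})---and with $J_1=\mathfrak{R}$, $J_2=\mathfrak{R}_0$. Since both Rayleigh quotients share the same numerator $\mathfrak{S}_h(\BGf)$, I will verify the reciprocal form (\ref{J1J2}) by controlling $\mathfrak{C}_h-\mathfrak{C}_h^0$ relative to $\mathfrak{S}_h$, uniformly on $\CA_h$.

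My first step is a pointwise algebraic identity that recasts $\mathfrak{C}_h^0$ in terms of the skew gradient $\BGo(\BGf)=\hf(\Grad\BGf-(\Grad\BGf)^T)$. Since $\Curl\BGf$ is the axial vector of $2\BGo(\BGf)$, the classical formula $\BGo^2=\Ba\otimes\Ba-|\Ba|^2\BI$ applied with $\Ba=\hf\Curl\BGf$ gives $-4\BGo(\BGf)^2=|\Curl\BGf|^2\BI-\Curl\BGf\otimes\Curl\BGf$. Pairing via the Frobenius product with the symmetric $\BGs_h$ and using $(\BGs_h,\Bw\otimes\Bw)=(\BGs_h\Bw,\Bw)$ turns this into $4(\BGs_h,-\BGo(\BGf)^2)=(\Tld{\BGs}_h\Curl\BGf,\Curl\BGf)$, so that after integration
\[
\mathfrak{C}_h^0(\BGf)=-\int_{\GO_h}(\BGs_h,\BGo(\BGf)^2)\,d\Bx.
\]
Next I expand $\Grad\BGf^T\Grad\BGf=e(\BGf)^2-\BGo(\BGf)^2+(e(\BGf)\BGo(\BGf)-\BGo(\BGf)e(\BGf))$---each of the three summands is symmetric, so couples with $\BGs_h$---and subtract the identity above to isolate the discrepancy as
\[
\mathfrak{C}_h(\BGf)-\mathfrak{C}_h^0(\BGf)=\int_{\GO_h}(\BGs_h,e(\BGf)^2+e(\BGf)\BGo(\BGf)-\BGo(\BGf)e(\BGf))\,d\Bx.
\]

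The remaining task is to absorb this discrepancy into $\mathfrak{S}_h$. The pointwise bounds $|e^2|\le|e|^2$ and $|e\BGo-\BGo e|\le 2|e||\BGo|$, combined with the Korn estimate $\|\BGo(\BGf)\|\le\|\Grad\BGf\|\le K(V_h)^{-1/2}\|e(\BGf)\|$ and the coercivity (\ref{Lcoerc}), yield
\[
\bigl|\mathfrak{C}_h(\BGf)-\mathfrak{C}_h^0(\BGf)\bigr|\le\frac{C\|\BGs_h\|_\infty}{\sqrt{K(V_h)}}\,\mathfrak{S}_h(\BGf)
\]
for $h$ sufficiently small; here $\|\BGs_h\|_\infty=O(1)$ because $\BGs_h=\SFL_0 e(\Bu^h)$ and $\|\Grad\Bu^h\|_\infty$ is uniformly controlled by (\ref{fundass})--(\ref{reglbda}). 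Because $\tfrac{1}{\mathfrak{R}(h,\BGf)}-\tfrac{1}{\mathfrak{R}_0(h,\BGf)}=(\mathfrak{C}_h^0-\mathfrak{C}_h)/\mathfrak{S}_h$, this translates into $\Hat{\Gl}(h)\sup_{\BGf\in\CA_h}|1/\mathfrak{R}-1/\mathfrak{R}_0|\le C\Hat{\Gl}(h)/\sqrt{K(V_h)}\to 0$ by hypothesis (\ref{sufcond}), and Theorem~\ref{th:Bequivalence} then delivers the claimed buckling equivalence. The only subtle point is the cross term $(\BGs_h,e\BGo-\BGo e)$: it is quadratic in $\Grad\BGf$ rather than in $e$, and is controllable by $\mathfrak{S}_h$ only after a Korn-induced loss of $K(V_h)^{-1/2}$. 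The scaling assumption (\ref{sufcond}) is precisely what is needed to offset this loss.
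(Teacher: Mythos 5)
Your proposal is correct and follows essentially the same route as the paper's proof: decompose $\Grad\BGf^{T}\Grad\BGf$ into its symmetric and skew parts, identify $\mathfrak{C}_{h}^{0}$ with the $-\BGo^{2}$ contribution via the cross-product identity, bound the remaining terms by $\|\BGs_{h}\|_{\infty}\|e(\BGf)\|^{2}(1+2/\sqrt{K(V_{h})})$ using Korn's inequality and coercivity, and invoke condition (\ref{J1J2}) of Theorem~\ref{th:Bequivalence}. The only differences are notational ($\BGo(\BGf)$ in place of $\pi(\Curl\BGf)/2$, and the cross term written as $e\BGo-\BGo e$ rather than combined into a single product), plus your explicit remark that $\Hat{\Gl}(h)$ is a buckling load by Theorem~\ref{th:crit}, which the paper leaves implicit.
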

\begin{proof}
  For $\Ba\in\bb{R}^{3}$, let $\pi(\Ba)$ denote a $3\times 3$ antisymmetric
  matrix defined by the cross-product map:
\[
\pi(\Ba)\Bu=\Ba\times\Bu.
\]
Then $\Grad\BGf-(\Grad\BGf)^{T}=\pi(\Curl\BGf)$.
We observe that replacing $\Grad\BGf$ with $e(\BGf)-\pi(\Curl\BGf)/2$ in
$\mathfrak{C}_{h}(\BGf)$ we obtain
\[
\mathfrak{C}_{h}(\BGf)=\int_{\GO_{h}}\left(\BGs_{h},e(\BGf)^{2}+e(\BGf)\pi(\Curl\BGf)\right)d\Bx
+\mathfrak{C}^{0}_{h}(\BGf).
\]
It follows that for every $\BGf\in V_{h}$
\[
|\mathfrak{C}_{h}(\BGf)-\mathfrak{C}^{0}_{h}(\BGf)|\le\|\BGs_{h}\|_{\infty}(\|e(\BGf)\|^{2}
+2\|e(\BGf)\|\|\Grad\BGf\|)\le\|\BGs_{h}\|_{\infty}\|e(\BGf)\|^{2}\left(1+\frac{2}{\sqrt{K(V_{h})}}\right).
\]
Recalling that, due to (\ref{Lcoerc}),
\[
\mathfrak{S}_{h}(\BGf)\ge\Ga_{\SFL_{0}}\|e(\BGf)\|^{2}
\]
we obtain
\[
\Hat{\Gl}(h)\left|\nth{\mathfrak{R}(h,\BGf)}-\nth{\mathfrak{R}_{0}(h,\BGf))}\right|\le
\frac{\|\BGs_{h}\|_{\infty}}{\Ga_{\SFL_{0}}}
\left(\Hat{\Gl}(h)+\frac{2\Hat{\Gl}(h)}{\sqrt{K(V_{h})}}\right).
\]
Thus (\ref{sufcond}) implies that the sufficient condition (\ref{J1J2}) for
buckling equivalence is satisfied. The theorem is proved.
\end{proof}
We remark that $\Curl\BGf$ is a scalar in 2D, and similar calculations show
that the functional $\mathfrak{R}(h,\BGf)$ can be replaced in 2D by
\[
\mathfrak{R}_{0}^{2D}(h,\BGf)=-\frac{\int_{\GO_{h}}(\SFL_{0}e(\BGf),e(\BGf))d\Bx}
{\nth{2}\int_{\GO_{h}}\Trc\BGs_{h}|\Grad\BGf|^{2}d\Bx}
\]
Therefore, in the case of a homogeneous compressive trivial branch
\[
\lim_{h\to 0}\Trc\BGs_{h}=\mathfrak{c}<0
\]
we have a general formula for the critical load \cite{grtr07}:
\[
\Hat{\Gl}(h)=\frac{2K_{\SFL_{0}}(V_{h})}{\mathfrak{c}}.
\]
By contrast, the situation in 3D is much more nuanced. Even in the case of a
homogeneous trivial branch, the critical load formula demands further study.

\section{Buckling of circular cylindrical shells}
\setcounter{equation}{0}
\label{sec:ccshb}
In this section we apply the theory of near-flip buckling developed in
Section~\ref{sec:genth} to the buckling of circular cylindrical shells under
axial compression.

\subsection{Trivial branch }
\label{sub:trbr}
Consider the circular cylindrical shell given in cylindrical coordinates
$(r,\Gth,z)$ as follows:
\[
\CC_{h}=I_{h}\times\bb{T}\times[0,L],\qquad I_{h}=[1-h/2,1+h/2],
\]
where $\bb{T}$ is a 1-dimensional torus (circle) describing $2\pi$-periodicity in
$\Gth$. In this paper we consider the axial compression of the shell where
the deformation $\By:\CC_{h}\to \mathbb R^3$ satisfies the following boundary
conditions:
\begin{equation}
  \label{bc}
  y_{\Gth}(r,\Gth,0)=y_{z}(r,\Gth,0)=y_{\Gth}(r,\Gth,L)=0,\quad
  y_{z}(r,\Gth,L)=(1-\Gl)L,\quad\Bt(\Bx;h,\Gl)=\Bzr,
\end{equation}
where $\Bt$ is the vector of tractions in
(\ref{energy}). The loading is parametrized by the compressive strain
$\Gl$ in the axial direction. To apply our theory of buckling we need to
describe the \bc s in the form (\ref{hdbc}). This is done by defining
\[
\bra{\By}(\Bx;h,\Gl)=(1-\Gl)z\Be_{z},
\]
and
\[
V_{h}^{\circ}=\{\BGf\in W^{1,\infty}(\CC_{h};\mathbb R^3):
\phi_{\Gth}(r,\Gth,0)=\phi_{z}(r,\Gth,0)=\phi_{\Gth}(r,\Gth,L)=\phi_{z}(r,\Gth,L)=0\},
\]
which gives
\begin{equation}
\label{Breather}
V_{h}=\{\BGf\in W^{1,2}(\CC_{h};\mathbb R^3):
\phi_{\Gth}(r,\Gth,0)=\phi_{z}(r,\Gth,0)=\phi_{\Gth}(r,\Gth,L)=\phi_{z}(r,\Gth,L)=0\}.
\end{equation}
These \bc s allow the shell to ``breathe'', since the radial
displacements are not prescribed at either end. In our notation the dependence
on $L$ will be consistently
suppressed, while the essential dependence on $h$ will be emphasized.

We observe that during buckling the Cauchy-Green strain tensor
$\BC=\BF^{T}\BF$ is close to the identity. Therefore, considering the energy
which is quadratic in $\BE=(\BC-\BI)/2$ should capture all the effects
associated with buckling. Hence, we assume, for the purposes of exhibiting the
explicit form of the trivial branch, that in the vicinity of the identity
matrix the energy density has the Saint Venant-Kirchhoff form:
\[
W(\BF)=\hf(\SFL_{0}\BE,\BE),\qquad\BE=\hf(\BF^{T}\BF-\BI).
\]
where the elastic tensor $\SFL_{0}$ is
isotropic.
We study stability of the homogeneous trivial branch
$\By(\Bx;h,\Gl)$ given in cylindrical coordinates by
\begin{equation}
  \label{trbr}
y_{r}=(a(\Gl)+1)r,\qquad y_{\Gth}=0,\qquad y_{z}=(1-\Gl)z.
\end{equation}
We compute, using the formula
\begin{equation}
\label{gradient}
\nabla \BGf=
\begin{bmatrix}
\phi_{r,r} & \dfrac{\phi_{r,\theta}-\phi_\theta}{r} & \phi_{r,z}\\
\phi_{\theta,r} & \dfrac{\phi_{\theta,\theta}+\phi_r}{r} & \phi_{\theta,z}\\
\phi_{z,r} & \dfrac{\phi_{z,\theta}}{r} & \phi_{z,z}\\
\end{bmatrix}.
\end{equation}
for the gradient of the vector field
$\BGf=\phi_r\Be_r+\phi_\theta\Be_\theta+\phi_z\Be_z$ in cylindrical coordinates
\[
\BF=\Grad\By=\left[
  \begin{array}{ccc}
    1+a & 0 &0\\
    0 & 1+a& 0\\
    0 & 0& 1-\Gl
  \end{array}
\right],\quad
\BE=\left[
  \begin{array}{ccc}
    a+\frac{a^{2}}{2} & 0 &0\\
    0 & a+\frac{a^{2}}{2}& 0\\
    0 & 0& \frac{\Gl^{2}}{2}-\Gl
  \end{array}
\right]
\]
Then we compute $\BP=\BF(\SFL_{0}\BE)$, and the traction-free condition
$\BP\Be_{r}=\Bzr$ on the lateral boundary leads to the expression for $a(\Gl)$:
\[
a(\Gl)=\sqrt{1+2\nu\Gl-\nu\Gl^{2}}-1,
\]
where $\nu$ is the Poisson's ratio for $\SFL_{0}$.
We now see that the fundamental assumptions (\ref{fundass}) and
(\ref{reglbda}) are satisfied, since
the trivial branch does not depend on $h$ explicitly. We compute
\begin{equation}
  \label{perfect.stress.tensor}
\BGs_{h}=-E\tns{\Be_{z}},
\end{equation}
where $E$ is the Young's modulus.
The compression tensor $\Tld{\BGs}_{h}$ defined in (\ref{compten}) is given by
\[
\Tld{\BGs}_{h}=-E\left[
  \begin{array}{ccc}
    1 & 0 & 0\\
    0 & 1 & 0\\
    0 & 0 & 0
  \end{array}
\right].
\]
We see that the compression tensor is degenerate. This degeneracy in the compression
tensor is one of the factors contributing to the sensitivity of the critical
load to imperfections.

\subsection{Scaling of the critical load}
\label{sec:scaling.lows}
\begin{theorem}
  \label{th:pcl}
Suppose that $\BGs_{h}$ is given by (\ref{perfect.stress.tensor}). Then there
exist constants $c>0$ and $C>0$ depending only on $L$ and the elastic moduli,
such that
\begin{equation}
 \label{perfect.lambda.hat.upper}
 ch\le\Hat{\lambda}(h)\leq Ch.
 \end{equation}
\end{theorem}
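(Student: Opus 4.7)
The plan is to specialize the Rayleigh-quotient characterization of $\Hat{\lambda}(h)$ from (\ref{clin})--(\ref{Kgen}) and then prove matching $h$-scaling upper and lower bounds. Since $\BGs_h = -E\tns{\Be_z}$, the inner product $(\BGs_h,\Grad\BGf^T\Grad\BGf)$ equals $-E|\partial_z\BGf|^2$, so
\[
\mathfrak{C}_h(\BGf) = -E\,\|\partial_z\BGf\|^{2},\qquad \mathfrak{R}(h,\BGf) = \frac{\mathfrak{S}_h(\BGf)}{E\,\|\partial_z\BGf\|^{2}}, \qquad \CA_h = \{\BGf\in V_h : \partial_z\BGf\not\equiv\Bzr\},
\]
and the theorem reduces to showing that the infimum of $\mathfrak{R}$ over $\CA_h$ is comparable to $h$.

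For the upper bound I would exhibit an explicit Koiter-type trial field. Following the classical shell analysis, take the Kirchhoff--Love 3D extension of the midsurface mode
\[
v_r = \cos(n_h\theta)\sin(m_h z),\qquad v_\theta = -\tfrac{1}{n_h}\sin(n_h\theta)\sin(m_h z),\qquad v_z = 0,
\]
suitably cut off in a layer near $z=0,L$ so that the resulting 3D field $\BGf$ lies in $V_h$. The integers $m_h = k_h\pi/L$ and $n_h$ are chosen on the Koiter curve $(m_h^{2}+n_h^{2})^{2}\sim m_h^{2}/h$, for instance $m_h,n_h\sim h^{-1/2}$. A direct computation of the strain tensor in cylindrical coordinates via (\ref{gradient}) shows that $e_{\theta\theta} = e_{zz}\equiv 0$ pointwise on the midsurface, while the remaining membrane component $e_{\theta z}$ and the bending components coming from the $(r-1)$-corrections balance along the Koiter curve to yield $\mathfrak{S}_h(\BGf)\sim Eh\,\|\partial_z\BGf\|^{2}$; the cutoff contributes only a lower-order correction. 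This gives $\mathfrak{R}(h,\BGf)\le Ch$.

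For the lower bound, the naive use of the Korn constant $K(V_h)\sim h^{3/2}$ proved in \cite{grha14} would only yield $\Hat{\lambda}(h)\ge cK(V_h)/E\sim h^{3/2}$, too weak by a factor of $\sqrt{h}$. What I would invoke instead is the sharper Korn-type inequality on the cylindrical shell, also established in \cite{grha14},
\[
\|e(\BGf)\|^{2} \;\ge\; c\,h\,\|\partial_z\BGf\|^{2}\qquad\forall\BGf\in V_h,
\]
in which only the axial derivative of $\BGf$ appears on the right-hand side. Combined with the coercivity (\ref{Lcoerc}) of $\SFL_{0}$ this gives $\mathfrak{R}(h,\BGf)\ge c\Ga_{\SFL_{0}}h/E$ uniformly in $\BGf\in\CA_h$, and hence $\Hat{\lambda}(h)\ge ch$.

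The main obstacle is precisely this sharper Korn-type inequality: because axial compression penalizes only $z$-variation, the $r$- and $\theta$-derivatives of $\BGf$ must be allowed to carry arbitrary rotational content without being counted on the right-hand side. Recovering the $h$-scaling rather than the Korn $h^{3/2}$-scaling requires a mode-by-mode Fourier analysis in $\theta$ and $z$, separating the critical Koiter-type modes on $(m^{2}+n^{2})^{2}\sim m^{2}/h$ from the longer- and shorter-wavelength ones and carefully handling the axial boundary conditions (\ref{Breather}); this is the content of \cite{grha14}, which is invoked here as a black box.
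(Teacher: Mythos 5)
Your proposal is correct in substance and, for the lower bound (the essential step), coincides with the paper's argument: the paper also reduces $-\mathfrak{C}_h(\BGf)$ to $E(\|\Gf_{r,z}\|^2+\|\Gf_{\Gth,z}\|^2+\|\Gf_{z,z}\|^2)$ and then invokes the companion paper \cite{grha14}, in the form of the component-wise Korn-type inequalities $\|\phi_{r,z}\|^{2}\le C h^{-1}\|e(\BGf)\|^{2}$ and $\|\phi_{\Gth,z}\|^{2}\le C h^{-1/2}\|e(\BGf)\|^{2}$ (Lemma~\ref{lem:KI}); summing these with the trivial bound on $\|\phi_{z,z}\|^2$ is exactly your inequality $\|e(\BGf)\|^{2}\ge c\,h\,\|\partial_z\BGf\|^{2}$. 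You correctly diagnose why the plain Korn constant $K(V_h)\sim h^{3/2}$ is insufficient and that the directional inequality is the real content, which must be taken from \cite{grha14} as a black box either way. Where you genuinely diverge is the upper bound: the paper does not build a Koiter mode but reuses the explicit ansatz (\ref{ansatz0}) from \cite{grha14}, which is stated to achieve all three Korn-type exponents \emph{simultaneously}; that ansatz sits at the $m\sim 1$, $n\sim h^{-1/4}$ end of the Koiter circle and comes with a compactly supported envelope $W(\eta,z)$, so the boundary conditions (\ref{Breather}) and the energy balance are already packaged in the cited lemma. Your alternative, a Kirchhoff--Love extension of a mode with $m_h\sim n_h\sim h^{-1/2}$ on the Koiter curve, does give the same scaling, but as written it is an assertion rather than a computation: you should verify that the residual membrane shear $e_{\Gth z}=O(m_h/n_h)$ and the $(r-1)$-bending terms are both $O(h\|\partial_z\BGf\|^2)$, and be careful with the cutoff -- the Kirchhoff--Love correction $-(r-1)v_{r,z}$ to $\phi_z$ does not vanish at $z=0,L$, and cutting off over a \emph{thin} layer of width $\Gd$ produces an $e_{rz}$ contribution of order $h/\Gd$, so the cutoff must act over an $O(1)$ portion of the axial length (equivalently, one should use a fixed compactly supported axial envelope, as the paper's ansatz does). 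These are standard repairs, so the gap is one of detail, not of concept.
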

\begin{proof}
Observe that
$$
\mathfrak{C}_{h}(\BGf)=\int_{\CC_{h}}(\BGs_{h},\Grad\BGf^{T}\Grad\BGf)d\Bx=-E(\|\Gf_{r,z}\|^2+\|\Gf_{\Gth,z}\|^2+\|\Gf_{z,z}\|^2),
$$
and there exist constants $\Ga>0$ and $\Gb>0$ (depending only on the elastic
moduli) such that
\[
\Ga\|e(\BGf)\|^{2}\le\mathfrak{S}_{h}(\BGf)\le\Gb\|e(\BGf)\|^{2}.
\]
Thus, in order to compute the scaling of $\Hat{\Gl}(h)$, given by (\ref{clin})
and verify conditions of Theorem~\ref{th:crit} we need to estimate the Korn
constant $K(V_{h})$, as well as the norms of gradient components
$\|\Gf_{r,z}\|^2$, $\|\Gf_{\Gth,z}\|^2$ and $\|\Gf_{z,z}\|^2$ in terms of
$\|e(\BGf)\|$. This was accomplished in our companion paper \cite{grha14}. The
desired estimates are stated in the following lemma.
\begin{lemma}[Korn-type inequalities]
\label{lem:KI}
There exist constants $C(L),c(L)>0$ depending only on $L$ such that
\begin{equation}
  \label{KI}
 c(L)h^{3/2}\leq K(V_{h})\leq C(L)h^{3/2}.
\end{equation}
 \begin{equation}
  \label{thetaz}
  \|\phi_{\Gth,z}\|^{2}\le\frac{C(L)}{\sqrt{h}}\|e(\BGf)\|^{2},
\end{equation}
\begin{equation}
  \label{rz}
   \|\phi_{r,z}\|^{2}\le\frac{C(L)}{h}\|e(\BGf)\|^{2}.
\end{equation}
Moreover, the powers of $h$ in the inequalities (\ref{KI})--(\ref{rz}) are optimal,
achieved \emph{simultaneously} by the ansatz
\begin{equation}
\begin{cases}
\label{ansatz0}
\phi^{h}_{r}(r,\Gth,z)=&-W_{,\eta\eta}\left(\frac{\Gth}{\sqrt[4]{h}},z\right)\\[2ex]
\phi^{h}_{\Gth}(r,\Gth,z)=&r\sqrt[4]{h}W_{,\eta}\left(\frac{\Gth}{\sqrt[4]{h}},z\right)+
\frac{r-1}{\sqrt[4]{h}}W_{,\eta\eta\eta}\left(\frac{\Gth}{\sqrt[4]{h}},z\right),\\[2ex]
\phi^{h}_{z}(r,\Gth,z)=&(r-1)W_{,\eta\eta z}\left(\frac{\Gth}{\sqrt[4]{h}},z\right)
-\sqrt{h}W_{,z}\left(\frac{\Gth}{\sqrt[4]{h}},z\right),
\end{cases}
\end{equation}
 for any smooth compactly supported function $W(\eta,z)$ on
 $(-1,1)\times(0,L)$, with the understanding that the function $\BGf^{h}(\Gth,z)$ is extended
 $2\pi$-periodically in $\Gth\in\bb{R}$.
\end{lemma}
Adding inequalities (\ref{thetaz}), (\ref{rz}) and an obvious inequality
$\|\phi_{z,z}\|^{2}\le\|e(\BGf)\|^{2}$ we obtain
\begin{equation}
  \label{Rest}
-\mathfrak{C}_{h}(\BGf)\le Ch\mathfrak{S}_{h}(\BGf).
\end{equation}
The power of $h$ in (\ref{Rest}) is optimal, achieved by the ansatz
(\ref{ansatz0}). Hence, the estimates (\ref{perfect.lambda.hat.upper}) are
proved.
\end{proof}
 We remark that the upper bound in (\ref{perfect.lambda.hat.upper}) implies
 that condition (\ref{sufcond}) in Theorem~\ref{th:crit} is satisfied. But
 then $\Hat{\lambda}(h)$ is the buckling load in the sense of
 Definition~\ref{def:asymbm}.
\begin{figure}[t]
  \centering
  \includegraphics[scale=0.5]{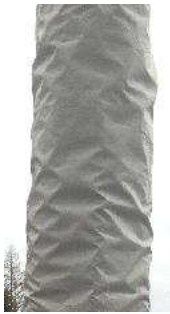}
  \caption{Yoshimura buckling pattern on the umbrella cover at the
    Mathematisches Forschungsinstitut, Oberwolfach, Germany. Photo by Antonio
    DeSimone.}
  \label{fig:mfocover}
\end{figure}
\begin{remark}
  We remark that the scaling of the critical strain $\Gl^{*}(h)\sim h$ implies
  that the elastic energy stored in the critically strained cylinder is of
  order $h^{3}$, since the stress remains proportional to the strain at the
  onset of buckling. Thus, the $\GG$-limit theorem from \cite{fjmm07}
  applies. However, that theorem misses the structure of low energy
  sequences, since the set of $W^{2,2}$ isometries of the cylindrical surface
  consists of rigid motions, and the limiting energy is zero. The non-trivial
  isometries are non-smooth (Lipschitz), given by the Yoshimura buckling
  pattern \cite{yosh55} (see Figure~\ref{fig:mfocover}), which seems to be
  captured by some of the theoretical buckling modes in \cite{grha15}.
\end{remark}
\subsection{Scaling instability}
\label{sub:scins}
In this section we exhibit scaling instability of the critical load under
imperfections of load and shape. The discussion of shape imperfections here is
not rigorous, since the key Korn and Korn-type inequalities from \cite{grha14}
are rigorously proved for perfectly circular cylindrical shells. However, once
the necessary technical inequalities are established for an imperfect shell,
its critical load can be estimated in a definitive and rigorous way, following
the same strategy, as for a perfect shell.

\subsubsection{Imperfections of load}
Consider a perfect isotropic circular cylindrical shell undergoing a
compressive deformation satisfying the \bc s (\ref{bc}), which are perturbed
\emph{arbitrarily}, but only at $z=L$. We also assume that the modified \bc s
do not violate the trivial branch regularity assumptions in
Definition~\ref{def:trbr}. Let us make an additional assumption that the
family of Lipschitz functions $\Bu^{h}$ from Definition~\ref{def:trbr} depends
regularly on $r$ and $h$. This means that
\begin{equation}
  \label{rhreg}
  \Bu^{h}(r,\Gth,z)\approx\Tld{\Bu}^{h}(r,\Gth,z)=\Bu^{0}(\Gth,z)+(r-1)\Bu^{1}(\Gth,z)+\frac{(r-1)^{2}}{2}\Bu^{2}(\Gth,z),
\end{equation}
understood in the following sense:
\[
\lim_{h\to 0}\Bu^{h}=\lim_{h\to 0}\Tld{\Bu}^{h}=\Bu^{0},\qquad
\lim_{h\to 0}\Grad\Bu^{h}=\lim_{h\to 0}\Grad\Tld{\Bu}^{h}=\lim_{h\to 0}\Grad(\Bu^{0}+(r-1)\Bu^{1}),
\]
\[
\lim_{h\to 0}\Grad\Grad\Bu^{h}=\lim_{h\to 0}\Grad\Grad\Tld{\Bu}^{h}
\]
where the first two limits are understood in the a.e. sense, while the
last limit is understood in the sense of distributions.

According to the formula (\ref{clin}),
the buckling load depends only on
\[
\BGs^{0}(\Gth,z)=\lim_{h\to 0}\BGs_{h}(r,\Gth,z)=\lim_{h\to 0}\SFL_{0}e(\Bu^{h}).
\]
Under these assumptions generic load imperfections at $z=L$ may involve
several functions of $\Gth$.  We will show now that somewhat surprisingly,
the regularity assumptions (\ref{rhreg}) guarantee that the set of possible
limits $\BGs^{0}(\Gth,z)$ depends only on 2 scalar parameters.
\begin{theorem}
  \label{th:loadimp}
Suppose that $\Bu^{h}(r,\Gth,z)$ depends on $r$ and $h$ regularly, in the
sense of (\ref{rhreg}). Suppose further that
\begin{itemize}
\item[(i)] $\Div(\SFL_{0}e(\Bu^{h}))=\Bzr$,
\item[(ii)] $\BGs_{h}\Be_{r}=\Bzr$ at $r=1\pm h/2$, where $\BGs_{h}=\SFL_{0}e(\Bu^{h})$,
\item[(iii)] $u^{h}_{z}(r,\Gth,0)=u^{h}_{\Gth}(r,\Gth,0)=0$.
\end{itemize}
Then there exist two constants $s$ and $t$, such that
\begin{equation}
  \label{uhrep}
  u^{0}_{r}+(r-1)u_{r}^{1}=-\frac{t\nu}{E}r,\quad
u^{0}_{\Gth}+(r-1)u_{\Gth}^{1}=\frac{2(1+\nu)s}{E}rz,\quad
u^{0}_{z}+(r-1)u_{z}^{1}=\frac{t}{E}z,
\end{equation}
and consequently
\begin{equation}
  \label{sigma0}
  \BGs^{0}=
  \begin{bmatrix}
    0 & 0 & 0\\
    0 & 0 & s\\
    0 & s & t
  \end{bmatrix}
\end{equation}
\end{theorem}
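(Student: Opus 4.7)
The plan is to pass to the thin-shell limit $h\to 0$ and extract from the bulk equilibrium (i), the traction-free lateral BCs (ii), and the partial Dirichlet BC (iii) a reduced 2D problem on the mid-surface $r=1$ that is rigid enough to force $\BGs^0$ into the two-parameter form (\ref{sigma0}). Using the regularity (\ref{rhreg}), I write $\BGs_h(r,\Gth,z)=\BGS_0(\Gth,z)+(r-1)\BGS_1(\Gth,z)+O((r-1)^2)$, with $\BGS_0=\BGs^0$. Evaluating the traction-free condition (ii) at $r=1\pm h/2$, adding the two equations, dividing by $2$ and sending $h\to 0$ yields $\BGS_0\Be_r=\Bzr$; subtracting, dividing by $h$ and sending $h\to 0$ yields $\BGS_1\Be_r=\Bzr$. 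Together these give six scalar identities at $r=1$.

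Next, I substitute these identities into the three cylindrical components of the bulk equilibrium $\Div\BGs_h=\Bzr$ evaluated at $r=1$ in the limit. Writing $\sigma^0_{ij}$ for the components of $\BGs^0$: the radial equation collapses to $\sigma^0_{\Gth\Gth}=0$; the $\Gth$-equation then reads $\partial_z\sigma^0_{\Gth z}=0$, so $\sigma^0_{\Gth z}=s(\Gth)$; and the $z$-equation gives $\sigma^0_{zz}=-s'(\Gth)z+t(\Gth)$. In parallel, the three scalar conditions $\BGS_0\Be_r=\Bzr$, combined with the isotropic stress-strain law, algebraically determine the thickness-expansion coefficients
\[
u^1_r=-\tfrac{\nu}{1-\nu}\bigl(u^0_{\Gth,\Gth}+u^0_r+u^0_{z,z}\bigr),\quad u^1_\Gth=u^0_\Gth-u^0_{r,\Gth},\quad u^1_z=-u^0_{r,z}.
\]
Combined with $\sigma^0_{\Gth\Gth}=0$, the constitutive law reduces to the 2D plane-stress relations $\epsilon_{\Gth\Gth}=-\nu\epsilon_{zz}$ and $\sigma^0_{zz}=E\epsilon_{zz}$. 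Integrating in $z$ from the Dirichlet data $u^0_\Gth(\Gth,0)=u^0_z(\Gth,0)=0$ in (iii) produces explicit formulas for $u^0_z$ and $u^0_\Gth$, and the algebraic relation $u^0_{\Gth,\Gth}+u^0_r=-\nu u^0_{z,z}$ then determines $u^0_r$, all in terms of $s(\Gth)$, $t(\Gth)$ and their $\Gth$-derivatives.

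The main obstacle is that this 2D system admits valid displacements for \emph{any} smooth $2\pi$-periodic $s(\Gth)$ and $t(\Gth)$, so concluding that $s,t$ are \emph{constants} requires the full strength of (iii). The key observation is that (iii) demands $u^h_\Gth$ and $u^h_z$ to vanish at $z=0$ for \emph{every} $r\in I_h$, so (\ref{rhreg}) forces both $u^0_\Gth(\Gth,0)=u^0_z(\Gth,0)=0$ \emph{and} $u^1_\Gth(\Gth,0)=u^1_z(\Gth,0)=0$. Using $u^1_\Gth=u^0_\Gth-u^0_{r,\Gth}$ and $u^1_z=-u^0_{r,z}$, these extra conditions translate into $u^0_{r,\Gth}(\Gth,0)=u^0_{r,z}(\Gth,0)=0$. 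Since the explicit formula gives $u^0_r(\Gth,0)=-\nu t(\Gth)/E$, the first equation immediately forces $t'\equiv 0$, so $t$ is constant. The second equation becomes the linear ODE $s''(\Gth)=(\mu\nu/E)\,s'(\Gth)$, whose only $2\pi$-periodic solution is a constant. Substituting these two constants back into the displacement formulas collapses every $\Gth$-derivative term and every nonlinear-in-$z$ term, producing (\ref{uhrep}); direct evaluation of $\BGs^0$ from $\Bu^0+(r-1)\Bu^1$ at $r=1$ then yields (\ref{sigma0}).
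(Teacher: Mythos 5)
Your proposal is correct and follows essentially the same route as the paper: pass to the $h\to 0$ limit of the equilibrium equations using the $(r-1)$-expansion, use the traction-free conditions to get $\BGs^{0}\Be_{r}=\BGs^{1}\Be_{r}=\Bzr$, deduce the $s(\Gth),t(\Gth)$ form of $\BGs^{0}$, and then invoke the vanishing of $u^{1}_{\Gth}$ and $u^{1}_{z}$ at $z=0$ (which is exactly how the paper's footnote pins down $t$ and $s$, respectively) to force both to be constant. The only blemish is the claimed ODE $s''=(\mu\nu/E)s'$ with an undefined $\mu$: the direct evaluation gives $u^{0}_{r,z}(\Gth,0)=-(2+\nu)s'(\Gth)/E$, so $s'\equiv 0$ outright, but since the only $2\pi$-periodic solutions of your ODE are constants anyway, the conclusion is unaffected.
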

\begin{proof}
By the assumptions of regularity (\ref{rhreg}) and by condition (i) we have
\begin{equation}
  \label{div0}
\lim_{h\to 0}\Div\BGs_{h}=\lim_{h\to 0}\Div(\BGs^{0}(\Gth,z)+(r-1)\BGs^{1}(\Gth,z))=0,
\end{equation}
where
\begin{equation}
  \label{ss}
\BGs^{0}=\lim_{h\to 0}\SFL_{0}e(\Bu^{0}+(r-1)\Bu^{1}),\qquad
\BGs^{1}=\lim_{h\to 0}\SFL_{0}e\left(\Bu^{1}+\frac{r-1}{2}\Bu^{2}\right).
\end{equation}
Passing to the limit as $h\to 0$ in (\ref{div0}), we obtain
\begin{equation}
  \label{limequil}
  \begin{cases}
  \Gs^{1}_{rr}+\Gs^{0}_{r\Gth,\Gth}+\Gs^{0}_{rr}-\Gs^{0}_{\Gth\Gth}+\Gs^{0}_{rz,z}=0,\\
\Gs^{1}_{r\Gth}+\Gs^{0}_{\Gth\Gth,\Gth}+2\Gs^{0}_{r\Gth}+\Gs^{0}_{\Gth z,z}=0,\\
\Gs^{1}_{rz}+\Gs^{0}_{\Gth z,\Gth}+\Gs^{0}_{rz}+\Gs^{0}_{zz,z}=0.
  \end{cases}
\end{equation}
The traction-free \bc s $\BGs_{h}\Be_{r}=\Bzr$ at $r=1\pm h/2$ imply that
\[
\BGs^{0}(\Gth,z)\Be_{r}=\BGs^{1}(\Gth,z)\Be_{r}=\Bzr
\]
for all $(\Gth,z)\in\bb{T}\times(0,L)$. Substituting these equations into
(\ref{limequil}) we obtain
\[
\Gs^{0}_{\Gth\Gth}=0,\quad\Gs^{0}_{\Gth z,z}=0,\qquad\Gs^{0}_{\Gth z,\Gth}+\Gs^{0}_{zz,z}=0.
\]
Solving these equations we obtain
\begin{equation}
  \label{linstr}
\BGs^{0}(\Gth,z)=
  \begin{bmatrix}
    0 & 0 & 0\\
    0 & 0 & s(\Gth)\\
    0 & s(\Gth) & t(\Gth)-zs'(\Gth)
  \end{bmatrix}.
\end{equation}
for some functions $s(\Gth)$ and $t(\Gth)$. The first equation in (\ref{ss})
can now be written as
\begin{equation}
  \label{ueq}
  \begin{cases}
  u^{1}_{\Gth}=u^{0}_{\Gth}-u^{0}_{r,\Gth},\quad u^{1}_{z}=-u^{0}_{r,z},\quad
u^{0}_{\Gth,z}+u^{0}_{z,\Gth}=\frac{2(1+\nu)}{E}s(\Gth),\\[2ex]
u^{1}_{r}=-\frac{\nu}{1-\nu}(u^{0}_{r}+u^{0}_{\Gth,\Gth}+u^{0}_{z,z}),\quad
u^{0}_{r}+u^{0}_{\Gth,\Gth}+\frac{\nu}{1-\nu}(u^{1}_{r}+u^{0}_{z,z})=0,\\[2ex]
u^{0}_{z,z}+\frac{\nu}{1-\nu}(u^{1}_{r}+u^{0}_{r}+u^{0}_{\Gth,\Gth})=
\frac{(1+\nu)(1-2\nu)}{E(1-\nu)}(t(\Gth)-zs'(\Gth)).
  \end{cases}
\end{equation}
Solving these equations subject to the conditions
\[
u^{0}_{z}(\Gth,0)=u^{0}_{\Gth}(\Gth,0)=u^{1}_{z}(\Gth,0)=u^{1}_{\Gth}(\Gth,0)=0
\]
we conclude that the functions $s(\Gth)$ and $t(\Gth)$ have to be
constant\footnote{We note that $s=$constant is a consequence of
 $u^{1}_{z}(\Gth,0)=0$, while $t=$constant is a consequence of
 $u^{1}_{\Gth}(\Gth,0)=0$.} and that the formulas (\ref{uhrep}) hold. The
formula (\ref{sigma0}) follows from (\ref{linstr}).
\end{proof}
Thus, the effect of generic imperfections of load on $\BGs^{0}$ may manifest
themselves only through a small perturbation of $(z,z)$-component, and the
appearance of a small constant $(\Gth z)$-component. In order to prove
rigorously that imperfections of shape can indeed result in $\BGs^{0}$ of the
form (\ref{sigma0}) with $s\not=0$ we need to exhibit a fully non-linear
trivial branch satisfying all our assumptions and leading to
(\ref{sigma0}). It is clear that the non-linear trivial branch satisfying the
perturbed \bc s may no longer be homogeneous. This prevents us from
exhibiting it explicitly for the Saint Venant-Kirchhoff energy, as in
(\ref{trbr}). However, for an incompressible Mooney-Rivlin material the desired
non-linear trivial branch can be computed explicitly (see
Appendix~\ref{app:nltb}).

It is an important feature of our approach, that in order to compute the
asymptotics of the buckling load and the buckling mode we do not need to know
the non-linear trivial branch explicitly. (We only need to know that the
linearly elastic trivial branch, in the sense of Definition~\ref{def:trbr},
exists.) The desired asymptotics is given by Theorem~\ref{th:crit} in terms of
the solution $\Bu^{h}$ of the equations of \emph{linear elasticity}. In order to obtain
$\BGs^{0}$ of the form (\ref{sigma0}) we observe that
\begin{equation}
  \label{lintor}
u^{h}_{r}=\nu r,\qquad u^{h}_{\Gth}=\Ge rz,\qquad u^{h}_{z}=-z,
\end{equation}
solves
\begin{equation}
  \label{semidef}
  \begin{cases}
    \Div(\SFL_{0}e(\Bu^{h}))=\Bzr,&\text{ in }\CC_{h},\\
    \BGs_{h}\Be_{r}=\Bzr,&r=1\pm\frac{h}{2},\\
    u^{h}_{z}=u^{h}_{\Gth}=0,&z=0,
  \end{cases}
\end{equation}
resulting in
\begin{equation}
  \label{impstr}
  \BGs_{h}=\left[
  \begin{array}{ccc}
    0 & 0 & 0\\
    0 & 0 & \dfrac{\Ge Er}{2(\nu+1)}\\
    0 & \dfrac{\Ge Er}{2(\nu+1)} & -E
  \end{array}
\right],\qquad
\BGs^{0}=\left[
  \begin{array}{ccc}
    0 & 0 & 0\\
    0 & 0 & \dfrac{\Ge E}{2(\nu+1)}\\
    0 & \dfrac{\Ge E}{2(\nu+1)} & -E
  \end{array}
\right].
\end{equation}
In this explicit solution the imperfections of load are described by a single
small, in absolute value, parameter $\Ge$. This specific representation of
$\BGs^{0}$ is, nevertheless, generic for arbitrary imperfections of load at
$z=L$, according to Theorem~\ref{th:loadimp}. Similarly to
Theorem~\ref{th:pcl}, the formulas (\ref{impstr}) determine the scaling of the
critical load with $h$, which, for every fixed $\Ge$, is \emph{different} from
(\ref{perfect.lambda.hat.upper}).
\begin{theorem}
  \label{th:licls}
  Suppose that $\BGs^{0}$ is given by (\ref{impstr}). Then there are positive
  constants $c$ and $C$, depending only on $L$ and the elastic moduli, such that
  \begin{equation}
    \label{impldest}
\frac{ch^{5/4}}{\Ge+h^{1/4}}\le\Hat{\Gl}(h)\le\frac{Ch^{5/4}}{\Ge+h^{1/4}},
  \end{equation}
when $h$ and $\Ge$ are sufficiently small.
\end{theorem}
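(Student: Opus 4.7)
The plan is to bypass the nonlinear trivial branch and compute the asymptotics of the constitutively linearized buckling load $\Hat\Gl(h)$ directly from its Rayleigh-quotient characterization (\ref{clin}), using the anisotropic Korn-type inequalities of Lemma~\ref{lem:KI}; Theorem~\ref{th:crit} will then upgrade $\Hat\Gl(h)$ to the true buckling load. For $\BGs_h$ of the form (\ref{impstr}) the compression functional reduces to
\[
\mathfrak{C}_h(\BGf)= -E\int_{\CC_h} A_{zz}(\BGf)\,d\Bx + \frac{\epsilon E}{\nu+1}\int_{\CC_h} r\,A_{\Gth z}(\BGf)\,d\Bx,
\]
where $A_{ij}(\BGf)=(\Grad\BGf^T\Grad\BGf)_{ij}$. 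The crucial observation is that every quadratic expression in $\Grad\BGf$ appearing here involves the $z$-column of $\Grad\BGf$, whose three entries $\Gf_{r,z}$, $\Gf_{\Gth,z}$, $\Gf_{z,z}$ are precisely the quantities controlled by (\ref{thetaz})--(\ref{rz}).

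For the lower bound, expanding $A_{zz}=|\Gf_{r,z}|^2+|\Gf_{\Gth,z}|^2+|\Gf_{z,z}|^2$ and combining (\ref{thetaz}), (\ref{rz}), and the trivial bound $\|\Gf_{z,z}\|=\|e_{zz}(\BGf)\|\le\|e(\BGf)\|$ yields $\int_{\CC_h} A_{zz}\,d\Bx \le C h^{-1}\|e(\BGf)\|^2$. For the cross term, I apply Cauchy--Schwarz separately to each of the three summands of $A_{\Gth z}$, written as $\int(\Grad\BGf)_{i\Gth}\Gf_{i,z}$. The $z$-column factor is bounded by (\ref{thetaz})--(\ref{rz}), or by $\|e(\BGf)\|$ when $i=z$; the $\Gth$-column factor is bounded via Korn (\ref{KI}) by $\|\Grad\BGf\|\le Ch^{-3/4}\|e(\BGf)\|$, except for the diagonal entry $(\Grad\BGf)_{\Gth\Gth}=e_{\Gth\Gth}(\BGf)$, which is controlled directly by $\|e(\BGf)\|$. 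The dominant of the three resulting products is $\|\Grad\BGf\|\|\Gf_{r,z}\|\le Ch^{-3/4}\|e(\BGf)\|\cdot Ch^{-1/2}\|e(\BGf)\|=Ch^{-5/4}\|e(\BGf)\|^2$; the other two give $Ch^{-1/4}\|e(\BGf)\|^2$ and $Ch^{-3/4}\|e(\BGf)\|^2$. Combining these with the coercivity $\mathfrak{S}_h(\BGf)\ge\alpha\|e(\BGf)\|^2$ yields $-\mathfrak{C}_h(\BGf)\le C h^{-5/4}(h^{1/4}+\epsilon)\mathfrak{S}_h(\BGf)$, and therefore $\mathfrak{R}(h,\BGf)\ge c h^{5/4}/(h^{1/4}+\epsilon)$ for every $\BGf\in\CA_h$.

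For the matching upper bound I would evaluate $\mathfrak{R}(h,\BGf^h)$ on the ansatz (\ref{ansatz0}) of Lemma~\ref{lem:KI}. A direct scaling computation (substituting $\eta=\Gth/h^{1/4}$ and using $|r-1|\le h/2$) gives $\mathfrak{S}_h(\BGf^h)\sim h^{9/4}$, $\int A_{zz}(\BGf^h)\,d\Bx\sim h^{5/4}$ (with leading contribution $\Gf_{r,z}=-W_{,\eta\eta z}=O(1)$), and $\bigl|\int A_{\Gth z}(\BGf^h)\,d\Bx\bigr|\sim h$, since the leading term of $A_{\Gth z}$ is $(\Gf_{r,\Gth}-\Gf_\Gth)\Gf_{r,z}/r\sim h^{-1/4}$ integrated over an effective volume of order $h^{5/4}$. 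The reflection $W(\eta,z)\mapsto W(-\eta,z)$ reverses the sign of the cross integral without affecting the other two, so $W$ can be chosen so that the $\epsilon$-term adds to rather than subtracts from $|\mathfrak{C}_h(\BGf^h)|$; then $-\mathfrak{C}_h(\BGf^h)\asymp h^{5/4}+\epsilon h$, and $\mathfrak{R}(h,\BGf^h)\asymp h^{5/4}/(h^{1/4}+\epsilon)$, establishing the upper bound in (\ref{impldest}). This same upper bound also verifies $\Hat\Gl(h)^2/K(V_h)\le Ch/(h^{1/4}+\epsilon)^2\to 0$, so the sufficient condition (\ref{sufcond}) is met and Theorem~\ref{th:crit} applies.

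The principal technical obstacle is the sharp cross-term estimate $\bigl|\int r A_{\Gth z}(\BGf)\,d\Bx\bigr|\le Ch^{-5/4}\|e(\BGf)\|^2$: applying Korn symmetrically to both factors would only yield the cruder $Ch^{-3/2}\|e(\BGf)\|^2$, producing the weaker lower bound $ch^{3/2}/(\epsilon+h^{1/2})$, which for small $\epsilon$ would miss the $h^{5/4}$ imperfection scaling that is the whole point of the theorem. The $h^{1/4}$ improvement comes from applying (\ref{rz}) to the $\Gf_{r,z}$ factor in the dominant product, so that only one of the two factors is controlled by Korn; the fact that the ansatz (\ref{ansatz0}) simultaneously saturates (\ref{KI}) and (\ref{rz}) is what makes this asymmetric estimate sharp.
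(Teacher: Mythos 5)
Your overall strategy coincides with the paper's: the lower bound rests on exactly the same asymmetric Cauchy--Schwarz estimate of the dominant cross term (the $\Gf_{r,z}$ factor controlled by (\ref{rz}), the other factor by the Korn inequality (\ref{KI}), giving $h^{-5/4}$ where a symmetric Korn bound would give only $h^{-3/2}$), and the upper bound uses the same ansatz (\ref{ansatz0}). The only structural difference is that you bound the subdominant terms of $\mathfrak{C}_{h}$ directly instead of discarding them through the buckling-equivalence step, which is harmless. The lower bound is complete and correct, as is the verification of (\ref{sufcond}).

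There is, however, a genuine gap in the upper bound. Your scaling computation shows only that the cross term satisfies $\bigl|\int_{\CC_{h}} r\,\bigl((\Grad\BGf^{h})^{T}\Grad\BGf^{h}\bigr)_{\Gth z}\,d\Bx\bigr| = O(h)$; the assertion that it is $\sim h$, i.e.\ bounded \emph{below} by $ch$, requires the limiting coefficient
\[
\int_{0}^{2\pi}\!\!\int_{0}^{L} W_{,\eta\eta\eta}(\eta,z)\,W_{,\eta\eta z}(\eta,z)\,d\eta\, dz
\]
to be nonzero, and this fails for the most natural choices of $W$: for any separable $W(\eta,z)=\phi(\eta)\psi(z)$ with compact support the integral factors as $\bigl(\int\phi'''\phi''\,d\eta\bigr)\bigl(\int\psi\psi'\,dz\bigr)=0$, both factors being integrals of exact derivatives. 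Your reflection argument $W(\eta,z)\mapsto W(-\eta,z)$ fixes the sign of the cross integral but cannot make a vanishing integral nonzero. Without a nondegenerate $W$ the ansatz only yields $-\mathfrak{C}_{h}(\BGf^{h})\ge ch^{5/4}$ and hence $\Hat{\Gl}(h)\le Ch$, which does not match the claimed upper bound in the regime $\Ge\gg h^{1/4}$ --- precisely the regime in which the theorem asserts the new scaling. The paper closes this by the explicit non-separable choice $W=\phi(\eta)\psi'(z)+\phi'(\eta)\psi(z)$, for which the integral equals $2\int\phi'''(\eta)^{2}\psi'(z)^{2}\,d\eta\,dz>0$. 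With that one addition your argument is complete.
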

\begin{proof}
Let
\[
\mathfrak{C}_{h}^{0}(\BGf)=\int_{\CC_{h}}(\BGs^{0},\Grad\BGf^{T}\Grad\BGf)d\Bx.
\]
We first prove the lower bound on $\Hat{\Gl}(h)$ by observing that
\[
-\mathfrak{C}_{h}^{0}(\BGf)=E(\|(\Grad\BGf)_{rz}\|^{2}+\|(\Grad\BGf)_{\Gth z}\|^{2})-
\dfrac{\Ge E}{2(\nu+1)}((\Grad\BGf)_{rz},(\Grad\BGf)_{r\Gth})+R_{h}(\BGf),
\]
where $(f,g)$ denotes the inner product in $L^{2}(\CC_{h})$ and
\[
R_{h}(\BGf)=E\|(\Grad\BGf)_{zz}\|^{2}-
\dfrac{\Ge E}{2(\nu+1)}\{((\Grad\BGf)_{\Gth z},(\Grad\BGf)_{\Gth\Gth})+
(\Grad\BGf)_{z\Gth},(\Grad\BGf)_{zz})\}.
\]
Then, for every $\BGf\in V_{h}$
\[
|R_{h}|\le C(\|e(\BGf)\|^{2}+\|e(\BGf)\|\|\Grad\BGf\|)\le
\frac{C\|e(\BGf)\|^{2}}{\sqrt{K(V_{h})}}.
\]
Let
\[
\Tld{\mathfrak{R}}(h,\BGf)=\frac{\mathfrak{S}_{h}(\BGf)}
{E(\|(\Grad\BGf)_{rz}\|^{2}+\|(\Grad\BGf)_{\Gth z}\|^{2})-
\dfrac{\Ge E}{2(\nu+1)}((\Grad\BGf)_{rz},(\Grad\BGf)_{\Gth r})}.
\]
Then
\[
\left|\nth{\Tld{\mathfrak{R}}(h,\BGf)}-\nth{\mathfrak{R}(h,\BGf)}\right|\le
\frac{C}{\sqrt{K(V_{h})}},
\]
and hence,
by Theorem~\ref{th:Bequivalence}, the pair $(\Tld{\mathfrak{R}}(h,\BGf),V_{h})$
is buckling equivalent to the pair $(\mathfrak{R}(h,\BGf),V_{h})$.
By Lemma~\ref{lem:KI} we obtain,
applying the Cauchy-Schwarz inequality,
$$|((\Grad\BGf)_{rz},(\Grad\BGf)_{\Gth r})|\leq\|(\Grad\BGf)_{rz}\|\|\Grad\BGf\|\leq
\frac{C\|e(\BGf)\|}{\sqrt{h}}K(V_{h})\|e(\BGf)\|\le\frac{C\|e(\BGf)\|^{2}}{h^{5/4}}.$$
Applying Lemma~\ref{lem:KI} and (\ref{Lcoerc}) we obtain
\begin{equation}
  \label{epsbd}
\Tld{\mathfrak{R}}(h,\BGf)\ge\frac{\Ga_{\SFL_{0}}\|e(\BGf)\|^{2}}
{C\|e(\BGf)\|^{2}(h^{-1}+h^{-1/2}+\Ge h^{-5/4})}\ge \frac{Ch^{5/4}}{\Ge+h^{1/4}},
\end{equation}
To obtain an upper bound on the critical load, we use test functions $\BGf^{h}$ given by
(\ref{ansatz0}) in the estimate
\[
\Hat{\Gl}(h)\le C\Tld{\mathfrak{R}}(h,\BGf^{h}).
\]
Using the explicit formulas (\ref{ansatz0}) for $\BGf^{h}$ we compute
\begin{equation}
  \label{impker}
\lim_{h\to 0}\nth{h}((\Grad\BGf^{h})_{rz},(\Grad\BGf^{h})_{\Gth r})
=\int_{0}^{2\pi}\int_{0}^{L}W_{,\eta\eta\eta}(\eta,z)W_{,\eta\eta  z}(\eta,z)d\eta dz.
\end{equation}
By Lemma~\ref{lem:KI}, in order to prove the upper bound in (\ref{impldest})
we only need to exhibit a fixed compactly supported function $W(\eta,z)$, such that
the \rhs\ in (\ref{impker}) is non-zero. This is done by choosing two arbitrary
non-zero compactly supported functions $\phi(\eta)$ and $\psi(z)$ and setting
\[
W(\eta,z)=\phi(\eta)\psi'(z)+\phi'(\eta)\psi(z).
\]
Then
\begin{multline*}
W_{,\eta\eta\eta}W_{,\eta\eta  z}=\nth{4}(\psi'(z)^{2})'(\phi''(\eta)^{2})'+
(\phi'''(\eta)^{2})'(\psi(z)^{2})'+(\phi'''(\eta)\phi''(\eta))'\psi(z)\psi''(z)\\
-\phi'''(\eta)^{2}(\psi(z)\psi'(z))'+2\phi'''(\eta)^{2}\psi'(z)^{2}.
\end{multline*}
This shows that
\[
\int_{0}^{2\pi}\int_{0}^{L}W_{,\eta\eta\eta}W_{,\eta\eta  z}d\eta dz=
2\int_{0}^{2\pi}\int_{0}^{L}\phi'''(\eta)^{2}\psi'(z)^{2}d\eta dz>0.
\]
\end{proof}
From the estimates (\ref{impldest}) we see that in order for the scaling
$h^{5/4}$ to be experimentally significant, $|\Ge|$ must be much larger than
$h^{1/4}$. This is unlikely for the typical values of $h\approx
10^{-4}$. Nevertheless, Theorem~\ref{th:licls} (together with
Appendix~\ref{app:nltb}) demonstrates rigorously that axially compressed
cylindrical shells exhibit scaling instability under imperfections of
load. We can also view this result as a strong indication that it is
the imperfections of shape that are largely responsible for the discrepancy
between the theory and experiment.

\subsubsection{Imperfection of shape}
In the case of shape imperfections our Korn inequalities for gradient and
gradient components, strictly speaking, cannot be applied, since the domain is
no longer $\CC_{h}$. In this case we conjecture that for some shape
imperfections, such as small localized dents the trivial branch would still
exist and satisfy out assumptions (\ref{fundass}), while the Korn constant
retains its $h^{3/2}$ asymptotics. While the arguments below are not exactly
rigorous, we believe that they do shed new light on the question of rigorous
estimation of the critical load for an imperfect cylindrical shell. The key
insight achieved in the foregoing analysis is that the reason for the
difference in scaling laws of the critical strain and the Korn constant is the
structure of the stress in the trivial branch (which in a perfect axially
compressed cylinder has only $zz$-component that is non-zero). The failure of
the imperfections of load to modify this structure in a significant way (see
Theorem~\ref{th:loadimp}) is due to the traction-free \bc s on the lateral surfaces of
the shell. This observation leads to the idea that if the shell is ``dented'',
the normal to the lateral surface may undergo a non-negligible change in a
small region. To model this mathematically we assume that the dented
cylindrical shell is given by
\[
\Tld{\CC}_{h}=\left\{(r,\Gth,z):\Gth\in\bb{T},\ z\in[0,L],1+
\Ge^{2}\rho\left(\frac{\Gth}{\Ge},\frac{z-z_{0}}{\Ge}\right)-\frac{h}{2}\le r\le 1+
\Ge^{2}\rho\left(\frac{\Gth}{\Ge},\frac{z-z_{0}}{\Ge}\right)+\frac{h}{2}\right\},
\]
where the function $\rho(\eta,\Gz)$ is compactly supported on a unit ball in
$\bb{R}^{2}$, where $\rho(\Gth/\Ge,(z-z_{0})/\Ge)$ is meant for $\Gth\in[-\pi,\pi]$ and
is understood as a $2\pi$-periodic function. We assume that $\Ge=\Ge(h)\to 0$,
as $h\to 0$ and $h/\Ge(h)\to 0$, as $h\to 0$. For the ``proof-of-concept''
demonstration we assume, without proof, that the linear stress in the trivial
branch can be written as
\begin{equation}
  \label{exp}
  \BGs^{h}(r,\Gth,z)=\BGs^{h}_{p}+\Tld{\BGs}^{h}(\Gth,z)+(r-1)\BGt^{h}(\Gth,z)+o(h),
\end{equation}
where $\BGs^{h}_{p}$ is the stress in the perfect shell, given by
(\ref{perfect.stress.tensor}). We assume that $\Tld{\BGs}^{h}=O(1)$ and
$\BGt^{h}=O(1)$, as $h\to 0$, while
\begin{equation}
  \label{Dexp}
  \Grad\BGs^{h}(r,\Gth,z)=\Grad(\BGs^{h}_{p}+\Tld{\BGs}^{h}(\Gth,z)+(r-1)\BGt^{h}(\Gth,z))+o(1).
\end{equation}
The normal to the traction-free surface of the imperfect cylinder
$\Tld{\CC}_{h}$ is now
\[
\BN_{h}=\Be_{r}-\Ge(\rho_{,\eta}\Be_{\Gth}+\rho_{,\Gz}\Be_{z}).
\]
According to (\ref{exp}) we must have
\begin{equation}
  \label{trfrimp}
  \Tld{\BGs}^{h}\BN_{h}+\BGs^{h}_{p}\BN_{h}=o(h),\qquad\BGt^{h}\BN_{h}=o(1).
\end{equation}
In components this implies
\[
\begin{cases}
\Tld{\Gs}^{h}_{rr}=\Ge(\rho_{,\eta}\Tld{\Gs}^{h}_{r\Gth}+\rho_{,\Gz}\Tld{\Gs}^{h}_{rz})+o(h),\\
\Tld{\Gs}^{h}_{r\Gth}=\Ge(\rho_{,\eta}\Tld{\Gs}^{h}_{\Gth\Gth}+\rho_{,\Gz}\Tld{\Gs}^{h}_{\Gth z})+o(h),\\
\Tld{\Gs}^{h}_{rz}=\Ge(-E\rho_{,\Gz}+\rho_{,\eta}\Tld{\Gs}^{h}_{\Gth z}+\rho_{,\Gz}\Tld{\Gs}^{h}_{zz})+o(h),
\end{cases}\qquad
\begin{cases}
  \tau^{h}_{rr}=o(1),\\
  \tau^{h}_{r\Gth}=o(1),\\
  \tau^{h}_{rz}=o(1).
\end{cases}
\]
The balance equations $\Div\BGs^{h}=\Bzr$ then become
\begin{equation}
  \label{impbal}
\begin{cases}
  \Ge\dif{}{\Gth}(\rho_{,\eta}\Tld{\Gs}^{h}_{\Gth\Gth}+\rho_{,\Gz}\Tld{\Gs}^{h}_{\Gth
    z})-\Tld{\Gs}^{h}_{\Gth\Gth}+\Ge\dif{}{z}(-E\rho_{,\Gz}+\rho_{,\eta}\Tld{\Gs}^{h}_{\Gth
    z}+\rho_{,\Gz}\Tld{\Gs}^{h}_{zz})=o(1),\\
\Tld{\Gs}^{h}_{\Gth\Gth,\Gth}+\Tld{\Gs}^{h}_{\Gth z,z}=o(1),\\
\Tld{\Gs}^{h}_{\Gth z,\Gth}+\Tld{\Gs}^{h}_{zz,z}=o(1).
\end{cases}
\end{equation}
At this point we abandon any semblance of rigor and set the \rhs s in
(\ref{impbal}) to zero and assume that
\[
\Tld{\BGs}^{h}=\Hat{\BGs}\left(\frac{\Gth}{\Ge},\frac{z-z_{0}}{\Ge}\right).
\]
The last two equations in (\ref{impbal}) then implies that
\[
\Tld{\Gs}^{h}_{\Gth\Gth}=s_{,\Gz\Gz}\left(\frac{\Gth}{\Ge},\frac{z-z_{0}}{\Ge}\right),\qquad
\Tld{\Gs}^{h}_{\Gth z}=-s_{,\eta\Gz}\left(\frac{\Gth}{\Ge},\frac{z-z_{0}}{\Ge}\right),\qquad
\Tld{\Gs}^{h}_{zz}=s_{,\eta\eta}\left(\frac{\Gth}{\Ge},\frac{z-z_{0}}{\Ge}\right).
\]
The first equation in (\ref{impbal}) becomes
\begin{equation}
  \label{PDE}
\rho_{,\eta\eta}s_{,\Gz\Gz}+s_{,\eta\eta}\rho_{,\Gz\Gz}-2\rho_{,\eta\Gz}s_{,\eta\Gz}=
s_{,\Gz\Gz}+E\rho_{,\Gz\Gz}.
\end{equation}
If we assume that $\rho_{,\eta\eta}(\eta,\Gz)$ and $\rho_{,\eta\Gz}$ are
uniformly small (i.e. the dent is localized significantly more in the $z$
direction than in $\Gth$), then $s\approx-E\rho$.  In order to trigger the
mode of instability with the critical strain scaling like the Korn constant
$\Gl(h)\sim h^{3/2}$ we require $\Gs^{h}_{\Gth\Gth}<-\Ga<0$ in a \nbh\ of a
point $(0,z_{0})$, i.e.  $\rho_{,\Gz\Gz}(0,0)>0$. This can be achieved only on
``inward dents''.  In general, we assume that there exists a decaying at
infinity solution $s(\eta,\Gz)$ of (\ref{PDE}), such that
$s_{,\Gz\Gz}(0,0)<0$.

In conclusion we note that the exponents $5/4=1.25$, associated with load
imperfections and $3/2=1.5$, associated with imperfections of shape are close
to the upper and lower limits of experimentally determined behavior of the
buckling load, respectively, \cite{call2000,hlp03}. We also note that the
observed buckling load of
the real imperfect structure may be further affected by the
subcritical nature of the respective bifurcations (see \cite{buha79} for a
lucid explanation why).

\medskip

\noindent\textbf{Acknowledgments.}  We are grateful to Eric Clement, Stefan
Luckhaus, Mark Peletier and Lev Truskinovsky for insightful comments and questions.
This material is based upon work supported by the National Science
Foundation under Grants No. 1008092.

\appendix

\section{Non-linear trivial branch for an  incompressible Mooney-Rivlin material.}
\setcounter{equation}{0}
\label{app:nltb}
Consider an incompressible Mooney-Rivlin type material with strain energy
function
\[
W(\BF)=\frac{E}{6}(|\BF|^{2}-3),\qquad\det\BF=1.
\]
We are looking for a trivial branch in a cylindrical shell, given in
cylindrical coordinates by
\begin{equation}
  \label{tbansatz}
y_{r}=\psi(r)\cos(\Ga z),\qquad y_{\Gth}=\psi(r)\sin(\Ga z),\qquad
y_{z}=(1-\Gl)z.
\end{equation}
It is expected that $\psi(r)$ also depends on $\Ga$, $\Gl$ and $h$. When
$\Ga=0$ we expect that $\psi(r)$ will reduce to $(a(\Gl)+1)r$, as in
(\ref{trbr}). We remark that, in principle, the ansatz (\ref{tbansatz}) should
also work for compressible materials, except the resulting non-linear second order ODE
for $\psi(r)$ cannot be solved explicitly. We compute
\[
\det(\Grad\By)=(1-\Gl)\psi'(r)\frac{\psi(r)}{r}.
\]
For an incompressible material we must have $\det(\Grad\By)=1$, and hence
\begin{equation}
  \label{psi}
\psi(r)=\sqrt{\frac{r^{2}}{1-\Gl}+\Gb}
\end{equation}
for some $\Gb>-1$.

The Piola-Kirchhoff stress function is
\[
\BP(\BF)=\frac{E}{3}\left(\BF-\frac{3\hat{p}}{E}\cof(\BF)\right),
\]
where the Lagrange multiplier $\hat{p}$ plays the role of pressure.
For $\By$, given by (\ref{tbansatz}) and $\BF=\Grad\By$ we compute
\[
\BF^{T}\BF=
\begin{bmatrix}
  (\psi'(r))^{2} & 0 & 0\\
  0 & \frac{\psi(r)^{2}}{r^{2}} & \frac{\Ga\psi(r)^{2}}{r}\\
  0 & \frac{\Ga\psi(r)^{2}}{r} & \Ga^{2}\psi(r)^{2} + (1-\Gl)^{2}
\end{bmatrix}.
\]
The traction-free condition $\BP\Be_{r}=\Bzr$ on $r=1\pm h/2$ can be written
as
\[
\BF^{T}\BF\Be_{r}=p\Be_{r},\quad r=1\pm\frac{h}{2},\qquad p=3\hat{p}/E.
\]
The formula for $\BF^{T}\BF$, together with $\det\BF=1$, implies that
\begin{equation}
  \label{tfbc}
p(r,\Gth,z)=(\psi'(r))^{2},\quad r=1\pm\frac{h}{2}.
\end{equation}
This suggests that it is reasonable to look for the trivial branch for which
the function $p(r,\Gth,z)$ depends only on $r$.
Under this assumption we compute
\[
\frac{3}{E}\BP=
\begin{bmatrix}
  s_{1}(r)\cos(\Ga z) & -s_{2}(r)\sin(\Ga z) & -s_{3}(r)\sin(\Ga z)\\
  s_{1}(r)\sin(\Ga z) & s_{2}(r)\cos(\Ga z) & s_{3}(r)\cos(\Ga z)\\
  0 & q_{1}(r) & q_{2}(r)
\end{bmatrix},
\]
where
\[
s_{1}=\psi'-\frac{p}{\psi'},\quad s_{2}=\frac{\psi}{r}-\frac{rp}{\psi},\quad
s_{3}=\Ga\psi,\quad q_{1}=\frac{\Ga rp}{1-\Gl},\quad q_{2}=1-\Gl-\frac{p}{1-\Gl}.
\]
It follows that $\Div\BP=\Bzr$ results in a single ODE for $p(r)$:
\begin{equation}
  \label{ode}
  (rs_{1})'=s_{2}+\Ga rs_{3}.
\end{equation}
Substituting (\ref{psi}) for $\psi(r)$ into (\ref{ode}) and solving for $p(r)$ we obtain
\[
p(r)=\nth{2(1-\Gl)}\left(
\ln\left(\frac{1}{1-\Gl}+\frac{\Gb}{r^{2}}\right)-r^{2}\Ga^{2}
-\frac{\Gb(1-\Gl)}{r^{2}+\Gb(1-\Gl)}+\Gg\right).
\]
The traction-free \bc s (\ref{tfbc}) become
\[
\frac{r^{2}}{r^{2}+\Gb(1-\Gl)}=\ln\left(\frac{1}{1-\Gl}+\frac{\Gb}{r^{2}}\right)-r^{2}\Ga^{2}
+\Gg-1,\quad r=1\pm\frac{h}{2}.
\]
Let
\[
\Phi(r;\Gl,\Gb)=\ln\left(\frac{1}{1-\Gl}+\frac{\Gb}{r^{2}}\right)-
\frac{r^{2}}{r^{2}+\Gb(1-\Gl)}.
\]
Then,
\begin{equation}
  \label{tfbc1}
\begin{cases}
  \Ga^{2}\left(1+\frac{h}{2}\right)^{2}=\Phi\left(1+\frac{h}{2};\Gl,\Gb\right)+\Gg-1,\\
  \Ga^{2}\left(1-\frac{h}{2}\right)^{2}=\Phi\left(1-\frac{h}{2};\Gl,\Gb\right)+\Gg-1
\end{cases}
\end{equation}
Eliminating $\Gg$ from (\ref{tfbc1}) we obtain
\[
\Ga^{2}=\nth{2h}\left(\Phi\left(1+\frac{h}{2};\Gl,\Gb\right)
-\Phi\left(1-\frac{h}{2};\Gl,\Gb\right)\right).
\]
when $h$ is small
\[
\Ga^{2}\approx\hf\Phi'(1;\Gl,\Gb)=-\frac{\Gb(1-\Gl)(2+\Gb(1-\Gl))}{(1+\Gb(1-\Gl))^{2}}.
\]
Thus, when $(h,\Gl)\to (0,0)$, $\Gb\approx-\Ga^{2}/2$. We conclude that
$\Ga$, and, therefore, $\Gb$ must go to zero, as $\Gl\to 0$, since otherwise,
the trivial branch $\By(\Bx;h,\Gl)$, given by (\ref{tbansatz}), (\ref{psi})
will not emanate from the undeformed state. The regularity of the trivial
branch in $\Gl$ demands
that $\Ga(h,\Gl)\sim\Ga_{0}(h)\Gl$, as $\Gl\to 0$. Thus, for an arbitrary
fixed parameter $\Gb_{0}>0$ we set $\Gb=-\Gb_{0}^{2}\Gl^{2}/2$, resulting in the
explicit expression for the parameter $\Ga$:
\[
\Ga(\Gl,h)=\sqrt{\frac{\Phi(1+h/2;\Gl,-\Gb_{0}^{2}\Gl^{2}/2)
-\Phi(1-h/2;\Gl,-\Gb_{0}^{2}\Gl^{2}/2)}{2h}}.
\]
We compute
\[
\left.\dif{\Ga}{\Gl}\right|_{\Gl=0}=\frac{4\Gb_{0}}{4-h^{2}},\qquad
\left.\dif{\psi}{\Gl}\right|_{\Gl=0}=\frac{r}{2}.
\]
Therefore,
the linearized trivial branch displacement $\Bu^{h}$ is given by
\[
u^{h}_{r}=\left.\dif{y_{r}}{\Gl}\right|_{\Gl=0}=\frac{r}{2},\qquad
u^{h}_{\Gth}=\left.\dif{y_{\Gth}}{\Gl}\right|_{\Gl=0}=\frac{4\Gb_{0}rz}{4-h^{2}},\qquad
u^{h}_{z}=\left.\dif{y_{z}}{\Gl}\right|_{\Gl=0}=-z.
\]
The corresponding linear stress and its $h\to 0$ limit are
\[
\BGs_{h}=E
\begin{bmatrix}
  0 & 0 & 0\\
  0 & 0 & \frac{4\Gb_{0}r}{3(4-h^{2})}\\
  0 & \frac{4\Gb_{0}r}{3(4-h^{2})} & -1
\end{bmatrix},\qquad
\BGs^{0}=E
\begin{bmatrix}
  0 & 0 & 0\\
  0 & 0 & \frac{\Gb_{0}}{3}\\
  0 & \frac{\Gb_{0}}{3} & -1
\end{bmatrix}.
\]
These agree with formulas (\ref{lintor}), (\ref{impstr}) for $\nu=1/2$.

 \bibliography{refs}

\def\cprime{$'$} \ifx \cedla \undefined \let \cedla = \c \fi\ifx \cyr
  \undefined \let \cyr = \relax \fi\ifx \cprime \undefined \def \cprime
  {$\mathsurround=0pt '$}\fi\ifx \prime \undefined \def \prime {'}
  \fi\def\Ya{Ya}
\begin{thebibliography}{10}

\bibitem{almr63}
B.~O. Almroth.
\newblock Postbuckling behaviour of axially compressed circular cylinders.
\newblock {\em AIAA J}, 1:627--633, 1963.

\bibitem{buha66}
B.~Budiansky and J.~Hutchinson.
\newblock A survey of some buckling problems.
\newblock Technical Report CR-66071, NASA, February 1966.

\bibitem{buha79}
B.~Budiansky and J.~Hutchinson.
\newblock Buckling: Progress and challenge.
\newblock In J.~F. Besseling and van~der Heijden, editors, {\em Trends in solid
  mechanics 1979. Proceedings of the symposium dedicated to the 65 th birthday
  of W. T. Koiter}, pages 185--208. Delft University, Delft University Press
  Sijthoff \& Noordhoff International publishers, 1979.

\bibitem{bushnell81}
D.~Bushnell.
\newblock Buckling of shells-pitfall for designers.
\newblock {\em AIAA J}, 19(9):1183--1226, 1981.

\bibitem{call2000}
C.~R. Calladine.
\newblock A shell-buckling paradox resolved.
\newblock In D.~Durban, G.~Givoli, and J.~G. Simmonds, editors, {\em Advances
  in the Mechanics of Plates and Shells}, pages 119--134. Kluwer Academic
  Publishers, Dordrecht, 2000.

\bibitem{dkzoa12}
R.~Degenhardt, A.~Kling, R.~Zimmermann, F.~Odermann, and F.~de~Ara{\'u}jo.
\newblock Dealing with imperfection sensitivity of composite structures prone
  to buckling.
\newblock In S.~B. Coskun, editor, {\em Advances in Computational Stability
  Analysis}. InTech, 2012.

\bibitem{fjmm07}
I.~Fonseca, N.~Fusco, G.~Leoni, and M.~Morini.
\newblock Equilibrium configurations of epitaxially strained crystalline films:
  existence and regularity results.
\newblock {\em Arch. Ration. Mech. Anal.}, 186(3):477--537, 2007.

\bibitem{fjm02}
G.~Friesecke, R.~D. James, and S.~M{\"u}ller.
\newblock A theorem on geometric rigidity and the derivation of nonlinear plate
  theory from three-dimensional elasticity.
\newblock {\em Comm. Pure Appl. Math.}, 55(11):1461--1506, 2002.

\bibitem{fjm06}
G.~Friesecke, R.~D. James, and S.~M{\"u}ller.
\newblock A hierarchy of plate models derived from nonlinear elasticity by
  gamma-convergence.
\newblock {\em Arch. Ration. Mech. Anal.}, 180(2):183--236, 2006.

\bibitem{goev70}
D.~J. Gorman and R.~M. Evan-Iwanowski.
\newblock An analytical and experimental investigation of the effects of large
  prebuckling deformations on the buckling of clamped thin-walled circular
  cylindrical shells subjected to axial loading and internal pressure.
\newblock {\em Develop. in Theor. and Appl. Mech.}, 4:415--426, 1970.

\bibitem{grha14}
Y.~Grabovsky and D.~Harutyunyan.
\newblock Exact scaling exponents in korn and korn-type inequalities for
  cylindrical shells.
\newblock submitted.

\bibitem{grha15}
Y.~Grabovsky and D.~Harutyunyan.
\newblock Rigorous derivation of the buckling load in axially compressed
  circular cylindrical shells.
\newblock in preparation.

\bibitem{grtr07}
Y.~Grabovsky and L.~Truskinovsky.
\newblock The flip side of buckling.
\newblock {\em Cont. Mech. Thermodyn.}, 19(3-4):211--243, 2007.

\bibitem{grig10ma}
J.~Griggs.
\newblock Experimental study of buckling of thin-walled cylindrical shells.
\newblock Masters of arts thesis, Temple University, Philadelphia, PA, May
  2010.

\bibitem{hlp06}
J.~Hor{{\'a}}k, G.~J. Lord, and M.~A. Peletier.
\newblock Cylinder buckling: the mountain pass as an organizing center.
\newblock {\em SIAM J. Appl. Math.}, 66(5):1793--1824 (electronic), 2006.

\bibitem{hlc99}
G.~Hunt, G.~Lord, and A.~Champneys.
\newblock Homoclinic and heteroclinic orbits underlying the post-buckling of
  axially-compressed cylindrical shells.
\newblock {\em Computer methods in applied mechanics and engineering},
  170(3):239--251, 1999.

\bibitem{hlp03}
G.~Hunt, G.~Lord, and M.~Peletier.
\newblock Cylindrical shell buckling: a characterization of localization and
  periodicity.
\newblock {\em DISCRETE AND CONTINUOUS DYNAMICAL SYSTEMS SERIES B},
  3(4):505--518, 2003.

\bibitem{hune91}
G.~Hunt and E.~L. Neto.
\newblock Localized buckling in long axially-loaded cylindrical shells.
\newblock {\em Journal of the Mechanics and Physics of Solids}, 39(7):881 --
  894, 1991.

\bibitem{hune93}
G.~W. Hunt and E.~L. Neto.
\newblock Maxwell critical loads for axially loaded cylindrical shells.
\newblock {\em Trans. ASME}, 60:702--706, 1993.

\bibitem{lcp00}
E.~Lancaster, C.~Calladine, and S.~Palmer.
\newblock Paradoxical buckling behaviour of a thin cylindrical shell under
  axial compression.
\newblock {\em International Journal of Mechanical Sciences}, 42(5):843 -- 865,
  2000.

\bibitem{lmap2011}
M.~Lewicka, L.~Mahadevan, and M.~R. Pakzad.
\newblock The {F}\"oppl-von {K}\'arm\'an equations for plates with incompatible
  strains.
\newblock {\em Proc. R. Soc. Lond. Ser. A Math. Phys. Eng. Sci.},
  467(2126):402--426, 2011.
\newblock With supplementary data available online.

\bibitem{lmop2011}
M.~Lewicka, M.~G. Mora, and M.~R. Pakzad.
\newblock The matching property of infinitesimal isometries on elliptic
  surfaces and elasticity of thin shells.
\newblock {\em Arch. Ration. Mech. Anal.}, 200(3):1023--1050, 2011.

\bibitem{lch97}
G.~Lord, A.~Champneys, and G.~Hunt.
\newblock Computation of localized post buckling in long axially compressed
  cylindrical shells.
\newblock {\em Philosophical Transactions of the Royal Society of London.
  Series A: Mathematical, Physical and Engineering Sciences},
  355(1732):2137--2150, 1997.

\bibitem{lorenz11}
R.~Lorenz.
\newblock Die nicht achsensymmetrische knickung d{\"u}nnwandiger hohlzylinder.
\newblock {\em Physikalische Zeitschrift}, 12(7):241--260, 1911.

\bibitem{momu04}
M.~G. Mora and S.~M{\"u}ller.
\newblock A nonlinear model for inextensible rods as a low energy
  {$\Gamma$}-limit of three-dimensional nonlinear elasticity.
\newblock {\em Ann. Inst. H. Poincar\'e Anal. Non Lin\'eaire}, 21(3):271--293,
  2004.

\bibitem{tenn64}
R.~C. Tennyson.
\newblock An experimental investigation of the buckling of circular cylindrical
  shells in axial compression using the photoelastic technique.
\newblock Tech. Report 102, University of Toronto, Toronto, ON, Canada, 1964.

\bibitem{timosh14}
S.~Timoshenko.
\newblock Towards the question of deformation and stability of cylindrical
  shell.
\newblock {\em Vesti Obshestva Tekhnologii}, 21:785--792, 1914.

\bibitem{tiwk59}
S.~Timoshenko and S.~Woinowsky-Krieger.
\newblock {\em Theory of plates and shells}, volume~2.
\newblock McGraw-hill New York, second edition, 1959.

\bibitem{wms65}
V.~I. Weingarten, E.~J. Morgan, and P.~Seide.
\newblock Elastic stability of thin-walled cylindrical and conical shells under
  axial compression.
\newblock {\em AIAA J}, 3:500--505, 1965.

\bibitem{yama84}
N.~Yamaki.
\newblock {\em Elastic Stability of Circular Cylindrical Shells}, volume~27 of
  {\em Appl. Math. Mech.}
\newblock North Holland, 1984.

\bibitem{yosh55}
Y.~Youshimura.
\newblock On the mechanism of buckling of a circular shell under axial
  compression.
\newblock Technical Report 1390, National advisory committee for aeronautics,
  Washington, DC, 1955.

\bibitem{zmc02}
E.~Zhu, P.~Mandal, and C.~Calladine.
\newblock Buckling of thin cylindrical shells: an attempt to resolve a paradox.
\newblock {\em International Journal of Mechanical Sciences}, 44(8):1583 --
  1601, 2002.

\end{thebibliography}

\end{document}